\documentclass[10pt,reqno,a4paper]{article} 

\usepackage{anysize}
\marginsize{3.0cm}{3.0cm}{2.2cm}{2.2cm}

 \usepackage{verbatim}
\usepackage[english]{babel}
\usepackage[centertags]{amsmath}
\usepackage{amsfonts,amssymb,amsthm}
\usepackage[svgnames]{xcolor}
\usepackage{comment}
\usepackage{hyperref} 
\usepackage{mathrsfs}
\usepackage[sans]{dsfont}
\usepackage{accents}

\let\OLDthebibliography\thebibliography
\renewcommand\thebibliography[1]{
  \OLDthebibliography{#1}
  \setlength{\parskip}{0pt}
  \setlength{\itemsep}{0pt plus 0.3ex} }

\numberwithin{equation}{section}

\theoremstyle{plain}
\newtheorem{theorem}{Theorem}[section]

\newtheorem{lemma}[theorem]{Lemma}

\theoremstyle{definition}

\newcommand{\N}{{\mathbb N}}
\newcommand{\R}{{\mathbb R}}

\newcommand{\T}{{\mathbb T}}
\renewcommand{\S}{{\mathbb S}}

\newcommand{\mA}{\mathcal{A}}
\newcommand{\mB}{\mathcal{B}}
\newcommand{\mC}{\mathcal{C}}

\newcommand{\mF}{\mathcal{F}}
\newcommand{\mG}{\mathcal{G}}

\renewcommand{\a}{\alpha}
\renewcommand{\b}{\beta}
\newcommand{\g}{\gamma}

\newcommand{\e}{\varepsilon}
\newcommand{\ph}{\varphi}
\newcommand{\lm}{\lambda}

\newcommand{\Om}{\Omega}

\newcommand{\s}{\sigma}

\renewcommand{\th}{\vartheta}

\newcommand{\la}{\langle}
\newcommand{\ra}{\rangle}
\newcommand{\pa}{\partial}
\renewcommand{\div}{\mathrm{div}\,}
\newcommand{\grad}{\nabla}

\newcommand{\curl}{\operatorname{curl}}

\newcommand{\bcb}{\begin{color}{blue}}
\newcommand{\bcr}{\begin{color}{red}}
\newcommand{\bcg}{\begin{color}{green}}
\newcommand{\ec}{\end{color}}

\newcommand{\mgt}{\ph} % m.g.t. = Modulo del Gradiente della Torsione
\title{Rigidity for capillary liquid drops of nearly circular section with constant vorticity}
\author{\normalsize{Pietro Baldi, Domenico Angelo La Manna, Giuseppe La Scala}}
\date{} 

\begin{document}
\maketitle 

\abstract{We consider time-independent solutions with constant vorticity
of the free boundary Euler equations for a 3D liquid drop with capillarity. 
A rigidity result for the solutions of this problem has been recently proved 
%by the authors 
with variational methods: 
if a certain quantity involving the vorticity parameter, the capillarity coefficient 
and the area of the equatorial section of the drop is below a certain value, 
then the solution has necessarily cylindrical symmetry, 
the shape of the drop is an oblate spheroid, 
flattened at the poles and bulged at the equator, 
and each fluid particle moves along a horizontal, circular trajectory 
with constant angular velocity. 
In this paper we develop a perturbation analysis of the problem 
for fluid domains whose equatorial section is close in $C^2$ norm 
to a disc, and we show that a rigidity result holds also above 
the threshold obtained with variational methods.}
 
%Via a perturbative argument we show that 
%if $\Omega$ is a stationary rotating solution, 
%the ratio $\frac{\a_0^2}{\s_0}$ is less than $\sqrt2(\frac{\pi}{|D|})^\frac32$
%and the equatorial section is a nearly disk, then
 %$\Omega$ has cylindrical symmetry.

\section{Introduction and main result}

We consider the free-boundary problem for a capillary liquid drop with constant vorticity
\begin{equation}\label{eq:system}
\begin{cases}
%\mD_t u = -\nabla p 
\pa_t u + \la u , \grad \ra u + \grad p = 0 
& \text{in } \Omega_t,
\\
\div u=0 & \text{in }  \Omega_t,
\\ 
\curl u=\a_0 e_3 & \text{in } \Om_t,
\\
p = \sigma_0 H_{\pa \Omega_t} & \text{on } \partial \Omega_t,
\\
V_t=\la u,\nu_{\pa\Om_t}\ra & \text{on } \partial \Omega_t,
\end{cases}
\end{equation}
where $\Om_t \subset \R^3$ is an open bounded set, occupied by the fluid,  
$u$ is the fluid velocity vector field, 
$p$ is the fluid pressure, 
$\alpha_0$ is a vorticity parameter, 
$e_3 = (0,0,1)$,
$\sigma_0$ is the capillarity constant, 
$H_{\pa \Omega_t}$ is the mean curvature of the boundary $\pa \Omega_t$ , 
$V_t$ is the normal velocity of the boundary $\pa \Om_t$, 
and $\nu_{\pa \Om_t}$ is the outer unit normal to the boundary.
The unknowns of the problem are $\Om_t, u, p$. 
In fact, as is well-known, the pressure $p$ is completely determined by $\Om$ and $u$, 
and for this reason we can consider $(\Om, u)$ as the only unknowns. 

We are interested in time-independent solutions of \eqref{eq:system}, 
for which the system %\eqref{eq:system} 
becomes  
\begin{equation}\label{eq:system.time.indep}
\begin{cases}
\la u , \grad \ra u + \grad p = 0 
& \text{in } \Omega,
\\
\div u=0 
& \text{in }  \Omega,
\\ 
\curl u=\a_0 e_3 
& \text{in } \Om,
\\
p = \sigma_0 H_{\pa \Omega} 
& \text{on } \partial \Omega,
\\
\la u, \nu_{\pa\Om} \ra = 0 
& \text{on } \partial \Omega.
\end{cases}
\end{equation}
The study of this problem dates back to the experimental work of Plateau \cite{Plateau}, 
where uniformly rotating surfaces of revolutions were observed, 
and the theoretical ones of Poincaré \cite{Poincaré} and Rayleigh \cite{Rayleigh}, 
where their existence is formally shown. 
What arises from these works is that the behaviour of such solutions 
is intimately connected with the quotient $\a_0^2 / \s_0$, 
as is also shown in \cite{Lopez} through a variational characterization of them. 
In \cite{BLL.1} it is shown that, under the convexity assumption for the domain $\Om$, 
any solution of \eqref{eq:system.time.indep} has horizontal velocity vector field, 
i.e., $\la u,e_3\ra = 0$, both $u$ and $p$ are independent of the variable $x_3$,  
the set $\Om$ has a horizontal plane of symmetry, which, 
with no loss of generality, 
%assuming the barycenter of the fluid mass is at the origin, 
is the plane $x_3 = 0$, 
and, calling $D \subset \R^2$ the equatorial section, i.e.,
\begin{equation} \label{def.D}
\Om \cap \{ x_3 = 0 \} = D \times \{ 0 \},   
\end{equation}
the boundary $\pa \Om$ in the half-space $x_3 > 0$ 
is the graph of a positive function $f : D \to (0, \infty)$, 
%i.e., $\pa \Om \cap \{ x_3 > 0 \} = \{ (x', f(x')) : x' \in D \}$. 
\begin{equation} \label{graph.f}
\pa \Om \cap \{ x_3 > 0 \} 
%= \{ (x', f(x')) : x' \in D \}.
= \{ (x_1, x_2, f(x_1, x_2)) : (x_1, x_2) \in D \}.
\end{equation}
It is also proved that the profile function $f$ is constant on the level sets of the
\emph{torsion function} $v_D$ of the planar set $D$, which is, by definition, 
the solution of the problem 
\begin{equation} \label{torsion.pb}
\Delta v = -1 \quad \text{in } D, \qquad 
v = 0 \quad \text{on } D.
\end{equation}
It is proved that the torsion function $v_D$ also has to satisfy the identity 
\begin{equation} \label{eq:torsion.3} 
\frac{\alpha_0^2}{2}  |\nabla v|^{2} 
+ \frac{ \sigma_0 |\beta|}{ |\nabla v| }  
+ \sigma_0 H_{\pa D} = c 
\quad \text{on } \pa D,
\end{equation}
where $\beta, c$ are real constants, 
and $H_{\pa D}$ is the curvature of the planar curve $\pa D$. 
Thus \eqref{torsion.pb}, \eqref{eq:torsion.3} is an overdetermined problem for the set $D$. 
The constants $\b, c$ can be expressed in terms of the planar set $D$, 
and one has $- \beta = |\beta| \geq 1/2$, 
\begin{equation} \label{eq:syst.beta.c,1}
c = \frac{ 3\alpha^2_0 Q + 2 \sigma_0 P }{2|D|},
\quad \ 
|\beta| = \frac{ \sigma_0 P + \alpha_0^2 Q }{\sigma_0 I_R}
= \frac{ \alpha_0^2 ( 3 Q - I_C ) + 2 \sigma_0 ( P - I_H ) }{2 \sigma_0 P},
\end{equation}
where $|D|$ is the area of $D$, 
$P = P(D)$ is its perimeter, and 
$Q, I_R, I_C, I_H$ are the shape functionals 
\begin{alignat}{2}
Q(D) & := \int_{D} |\nabla v_D|^2 \, dx, 
\qquad \ &
I_R(D) & := \int_{\pa D} \frac{ \la x,\nu_{\pa D} \ra }{ |\nabla v_D| } \, d\sigma, 
\label{def.Q.IR} 
\\ 
I_C(D) & := \int_{\pa D} |\nabla v_D|^3 \, d\sigma,
\qquad \ & 
I_H(D) & := \int_{\pa D} H_{\pa D} |\nabla v_D| \, d\sigma.
\label{def.IC.IH}
\end{alignat} 
In \cite{BLL.1} it is proved that, if 
\begin{equation}\label{convex.bound}
\frac{\a_0^2}{\s_0} |D|^{\frac32} < \sqrt{2} \, \pi^{\frac32},
\end{equation}
then $D$ can only be a disc, 
and hence the solution of \eqref{eq:system.time.indep} has cylindrical symmetry, 
$\Omega$ is an oblate spheroid, flattened at the poles and bulged at the equator, 
whose profile function $f$ in \eqref{graph.f} 
is the radial function $f(x_1, x_2) = f_0(\xi)$, 
$\xi = \sqrt{x_1^2 + x_2^2} \in [0, r]$, 
with $f_0(r) = 0$, 
\begin{equation} \label{formula.f}
\frac{f_0'(\xi)}{\sqrt{1 + f_0'(\xi)^2}} 
= - \frac{ (1 - \gamma) \xi}{r} - \frac{\gamma \xi^3}{r^3}, 
\quad \ \gamma = \frac{\alpha_0^2 r^3}{32 \sigma_0}, 
\quad \ r = \Big( \frac{|D|}{\pi} \Big)^{\frac12},
\end{equation}
and each fluid particle moves along a horizontal, circular trajectory 
with constant angular velocity. 
This rigidity property comes from the fact that when \eqref{convex.bound} holds, 
the problem \eqref{torsion.pb}, \eqref{eq:torsion.3} can be turned into 
an overdetermined Serrin problem \cite{S}, 
in which \eqref{eq:torsion.3} is a constraint of the form  
$|\grad v_D| = \ph (H_{\pa D})$, 
where $\ph$ is a function with the monotonicity property 
required by Serrin's Theorem. 
Hypothesis \eqref{convex.bound} is assumed in \cite{BLL.1} 
precisely to guarantee that monotonicity.

The purpose of the present paper is to investigate further 
the range of parameters where this rigidity property occurs, 
using a perturbation analysis instead of variational methods.
We express the boundary $\pa D$ as a graph over the unit circle 
$\S^1 := \{ x \in \R^2 : |x|=1 \}$ of an elevation function, %  $h$, 
we calculate the second order Taylor expansion of the shape functionals 
around the circular shape, %  when $D$ is a disc, 
and we prove that, when $D$ is close to a disc in $C^2$ topology, 
rigidity occurs even if condition \eqref{convex.bound} is violated.   
This shows that bound \eqref{convex.bound} 
%although rather natural to impose  
is not optimal. 

To state the main result, it is convenient to introduce the following notation. 
For any real $m > 0$, any planar set $D$ of area $|D|=m$ 
%with barycenter at the origin of $\R^2$ 
can be written as $D = r D'$ where $D'$ has the same area as the unit disc 
$|D'| = \pi$. Thus $m = |D| = |rD'| = r^2 \pi$, that is, 
%i.e., $r = (\frac{m}{\pi})^{\frac12}$. 
\begin{equation} \label{D.D'}
D = r D', \quad \ 
|D'| = \pi, \quad \ 
r = \Big( \frac{|D|}{\pi} \Big)^{\frac12}.
\end{equation}
Assuming that the boundary of $\pa D'$ is the graph of an elevation function $h$ over 
the unit circle $\S^1$, one has 
\begin{equation} \label{pa.D.graph} % {nearly.circular}
\pa D' = \{ x (1 + h(x)) : x \in \S^1 \}, \quad \ 
\pa D =  \{ r x (1 + h(x)) : x \in \S^1 \},
\end{equation}
with $r$ in \eqref{D.D'}. % where $r = (|D|/\pi)^{1/2}$.
The main result of this paper is the following theorem. 

\begin{theorem}[Rigidity of solutions with nearly circular section] 
\label{thm:rigidity} 
Given $\alpha_0, \sigma_0 > 0$, 
there exists $\delta > 0$ 
depending on $\alpha_0, \sigma_0$ with the following property. 
Let $(u,\Om)$ be a $C^2$ solution of \eqref{eq:system.time.indep} 
such that the barycenter of the equatorial section $D$ in \eqref{def.D} 
is at the origin of $\R^2$ 
and its boundary $\pa D$ is the graph \eqref{pa.D.graph} 
of an elevation function $h$ with 
\[
\| h \|_{C^2(\S^1)} < \delta.
\] 
Suppose that 
\begin{equation} \label{Rayleigh.ratio}
\frac{\a_0^2}{\sigma_0} \left(\frac{|D|}{\pi}\right)^\frac32 < \frac{64}{3}.
\end{equation}
Then $D$ is a disc, $\Omega$ is the oblate spheroid of profile \eqref{graph.f}, \eqref{formula.f},
and $u$ is the vector field $\frac12 \alpha_0 (-x_2, x_1, 0)$. 
%and each fluid particle moves along a horizontal, circular trajectory with constant angular velocity. 
\end{theorem}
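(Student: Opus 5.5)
The plan is to reduce everything to the overdetermined problem \eqref{torsion.pb}, \eqref{eq:torsion.3} for the planar section, with the constants $\beta, c$ given by \eqref{eq:syst.beta.c,1}. From \cite{BLL.1} we already know that any solution has horizontal velocity independent of $x_3$ and a horizontal plane of symmetry, and that it is determined by $D$ through $f$ and $v_D$. So it suffices to show that $h\equiv 0$. Once $D$ is a disc, the profile \eqref{formula.f} and the rigid rotation $u=\frac12\a_0(-x_2,x_1,0)$ follow from \cite{BLL.1}. We rescale by \eqref{D.D'} so that $|D'|=\pi$. The equation \eqref{eq:torsion.3} then becomes a single nonlinear equation $\mF(h;\gamma)=0$ on $\S^1$, depending only on the dimensionless parameter $\gamma=\a_0^2 r^3/(32\s_0)$. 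Dividing by $\s_0/r$, its left-hand side is
\[
\mF(h;\gamma)=16\gamma\, r^{-2}|\grad v_{D}|^2\circ(r\cdot) + \frac{|\beta|}{r^{-1}|\grad v_{D}|} + H_{\pa D'} - c\,r/\s_0 .
\]
Hypothesis \eqref{Rayleigh.ratio} reads $\gamma<\frac{64}{3\cdot 32}=\frac23$.

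For the first step we expand the shape functionals $v_{D'}$, $|\grad v_{D'}|$ on $\pa D'$, $H_{\pa D'}$, $P$, $Q$, $I_R$, $I_C$, $I_H$ around $h=0$. For the disc we have $v=(1-|x|^2)/4$ and $|\grad v|=1/2$ on the boundary. We would compute the first-order (and, where needed, second-order) shape derivatives in Fourier modes $h=\sum_k \hat h_k e^{ik\theta}$. The torsion function on the perturbed domain is obtained by solving a Dirichlet problem on the disc, and its first variation of $|\grad v|$ on the boundary acts on mode $k$ as a multiplier like $\frac12(|k|-1)\hat h_k$, up to normalization, via the Dirichlet-to-Neumann map $|k|$. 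The curvature variation is $-(h''+h)$, with multiplier $k^2-1$. The area constraint $|D'|=\pi$ kills mode $0$ to first order, and the barycenter condition kills modes $\pm1$ to first order. The linearized operator $L_\gamma=d_h\mF(0;\gamma)$ is then diagonal. Its symbol has the form $\lambda_k(\gamma)=(k^2-1)+a(\gamma)(|k|-1)$ with $a$ affine in $\gamma$, coming from the $\gamma|\grad v|^2$ term (which decreases it) and the $|\beta|/|\grad v|$ term (which increases it, since $|\beta|=\frac12+\frac{\gamma\cdot\text{const}}{}$ at the disc). The key computation is to verify that $\lambda_k(\gamma)>0$ for all $|k|\ge2$ exactly when $\gamma<2/3$. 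The worst mode is $k=2$, where $\lambda_2=3+a(\gamma)$ vanishes at $\gamma=2/3$, which explains the constant $64/3$. This is a bifurcation threshold of Rayleigh type.

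Second, we turn this into rigidity. For $\gamma<2/3$ fixed, we have $\lambda_k\ge c_\gamma k^2$ for $|k|\ge2$. We would test the equation $\mF(h;\gamma)=0$ against the projection $h_{\ge2}$ of $h$ onto the modes $|k|\ge2$. Low modes are controlled quadratically: $|\hat h_0|+|\hat h_{\pm1}|\lesssim \|h\|_{C^1}\|h\|_{H^1}$ from the exact area and barycenter constraints. Writing $\mF=L_\gamma h+\mathcal{R}(h)$ with $\|\mathcal{R}(h)\|$ bounded by $\|h\|_{C^2}\|h\|_{H^1}$ in the appropriate weak pairing, we get
\[
c_\gamma\|h_{\ge2}\|_{H^1}^2\le C\|h\|_{C^2}\|h\|_{H^1}^2,
\]
which forces $h=0$ once $\delta$ is small. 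Here $\delta$ depends on $\gamma$, and hence on $\a_0,\s_0$ and $r$. To obtain a $\delta$ independent of $r$, we would note that the constant degenerates only as $\gamma\to2/3$. We would therefore either track it or state $\delta$ as depending on the ratio, as in the theorem.

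The main obstacle is the remainder estimate. The torsion function's boundary gradient depends nonlocally on $h$, and we only assume $C^2$ smallness, whereas $H_{\pa D'}$ involves $h''$. We would pair in $H^1$ by integrating the $h''$ terms by parts, so that the quadratic remainders only need $h$ in $C^2$ times $h$ in $H^1$. For the nonlocal part we would use a Hanzawa-type diffeomorphism, pulling the domain back to the disc, together with Schauder/$H^2$ elliptic estimates with coefficients $C^1$-close to the identity. The exact second-order expansion is needed only to confirm that the constants $\beta$ and $c$ contribute no $k\ge2$ linear terms; this holds because they are scalars, so their first variations only affect mode $0$.
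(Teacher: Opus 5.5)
Your proposal is correct in outline, and it takes a genuinely different route from the paper.

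\textbf{The two routes.} You linearize the pointwise condition \eqref{eq:torsion.3} and test it against $h_{\ge2}$. The paper does not linearize \eqref{eq:torsion.3}; that condition enters pointwise only in the bootstrap of Lemma \ref{lemma:smooth}. Instead the paper works with the scalar identity $\mC(D')=0$ of \eqref{mC.D'.=.0}, which expresses the compatibility of the two formulas for $|\beta|$ in \eqref{eq:syst.beta.c,1}. The disc satisfies this identity for every $\lm_0$, so its first variation vanishes. The paper therefore has to compute the full second-order Taylor polynomial: second shape derivatives of $v$, the term $\mF_2$, and expansions of $P,Q,I_R,I_C,I_H$.

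\textbf{Consistency of your symbol.} Your symbol agrees with the paper's outcome. Use $\lm_0=16\gamma$, $|\beta|=\frac12+\gamma$ at the disc, and first variation of $|\grad v|$ equal to $\frac{1-|k|}{2}\hat h_k$. Mind the sign here; your qualitative remarks about which term raises or lowers the symbol are right with this sign. Then $a(\gamma)=2|\beta|-8\gamma=1-6\gamma$, so $\lambda_k=(|k|-1)(|k|+2-6\gamma)$. The paper's $T(h,\mC)=2\pi\sum_{\ell\ge2,\,m}\ell(\ell-1)(\ell+2-\frac{3\lm_0}{8})h_{\ell,m}^2$ is exactly $2\pi\sum\ell\,\lambda_\ell h_{\ell,m}^2$. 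In effect it is your linearized operator tested against $\mG h$ instead of $h_{\ge2}$, which is why the paper gets $H^{3/2}$ rather than $H^1$ coercivity. The modes $0,\pm1$ are handled the same way in both, through the area and barycenter constraints.

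\textbf{What your route buys.} Only first-order shape derivatives are needed; you need no second-order expansion at all. The first variations of $\beta$ and $c$ are constants, killed by the zero-mean test function, so your last sentence overstates what is required. You use only the value of $|\beta|$ at the disc. You need neither the bound $\|h\|_{C^6}\le C\|h\|_{C^2}$ of Lemma \ref{lemma:smooth} nor the interpolation step.

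\textbf{The price, and why it is payable.} You need the tame estimate $|\la\mathcal R(h),h_{\ge2}\ra|\le C\|h\|_{C^2}\|h\|_{H^1}^2$. This is within reach:
\begin{itemize}
\item The curvature is affine in $h''$, so its quadratic remainder is $O(h^2+h'^2)+O(|h|+|h'|)\,h''$. You can put $h''$ in $L^\infty$, without integrating by parts.
\item By Lemma \ref{lemma:torsion.DN}, $\ph(h)-\ph(0)-\ph'(0)[h]$ equals a local term of quadratic order in $(h,\grad_{\S^1}h)$ minus $\big(\frac12(G(h)-\mG)h+\frac14G(h)(h^2)\big)$.
\item So it suffices to have a first-order Lipschitz bound $\|(G(h)-\mG)\psi\|_{H^{-1/2}}\le C\|h\|_{C^2}\|\psi\|_{H^{1/2}}$, together with uniform boundedness of $G(h)$. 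Both follow from the variational formulation after your Hanzawa pull-back.
\end{itemize}
The paper instead gets its cruder cubic bound \eqref{est.R.base} for free from analyticity. It pays for the $H^{3/2}$-versus-$C^2$ mismatch with the bootstrap and interpolation.

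\textbf{On $\delta$.} Your caveat is well founded and applies equally to the paper. The paper's $\mu_0$, like your $c_\gamma$, tends to $0$ as the ratio in \eqref{Rayleigh.ratio} approaches $64/3$. Hence $\delta$ must depend on that ratio, and so on $|D|$, not only on $\a_0,\s_0$.
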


\medskip

A planar set $D$ like that in Theorem \ref{thm:rigidity} 
is close to a disc in the $C^2$ topology. 
We call it a \emph{nearly circular} set.

\medskip

\underline{Summary of the proof}. 
We present, in short, the proof of Theorem \ref{thm:rigidity}.  
%The main argument leading to Theorem \ref{thm:rigidity} is the following. 

\medskip

\emph{The key identity.}
From the last identity in \eqref{eq:syst.beta.c,1}, 
separating the terms with the vorticity parameter $\alpha_0^2$ 
from those with the capillarity coefficient $\sigma_0$,
we find 
\begin{equation} \label{main.identity}
\s_0 \mA(D) + \frac{\a_0^2}{2} \mB(D) = 0,
\end{equation}
where 
\begin{equation} \label{eq:defn.mA}
\mA := (P - I_H) I_R - P^2, \qquad 
\mB := I_R (3Q-I_C) - 2PQ.
\end{equation}
Identity \eqref{main.identity} is the basis for the proof of Theorem \ref{thm:rigidity}.

\medskip

\emph{Normalization of the area.}
The functionals $\mA,\,\mB$ satisfy the following invariance properties.
Translation: if $v_E$ is the torsion function of some planar set $E$ and
$E'= x_0 + E$, then $v_{E'}(x) = v_{E}(x-x_0)$.
Hence both $\mA$ and $\mB$ are invariant under translations. 
%and assuming that $D$ satisfies the barycenter of the set $D$ is at the origin. 
Scaling: for $r > 0$, the torsion function of the set $r E$ 
is $v_{r E} (x) = r^2 v_E(\frac xr)$,
and therefore $\mA$ and $\mB$ satisfy 
\[
\mA (r E) = r^2 \mA(E), \quad \ 
\mB (r E) = r^5 \mB(E).
\]
These invariance properties enables one to split the role of the area $|D|$ from that of the shape of $D$. 
Writing the set $D$ as in \eqref{D.D'}, 
%For any given $m > 0$, any set $D$ of area $|D|=m$ 
%can be written as $D = r D'$ where $D'$ has the same area as the unit disc 
%$|D'| = \pi$. Thus $m = |D| = |rD'| = r^2 \pi$, i.e., 
%$r = (\frac{m}{\pi})^{\frac12}$, and 
identity \eqref{main.identity} becomes 
\begin{equation} \label{mC.D'.=.0}
\mC(D') = 0,
\end{equation}
where $\mC$ is the functional 
\begin{equation} \label{def.mC}
\mC := \mA + \lm_0 \mB, 
\end{equation}
and the constant $\lm_0$ is 
\[
\lm_0 := \frac{\alpha_0^2}{2 \sigma_0} \Big( \frac{|D|}{\pi} \Big)^{\frac32}.
\]
Note that \eqref{mC.D'.=.0} is an identity for the set $D'$ 
with normalized area $|D'| = \pi$, 
while the area $|D|$ appears in the parameter $\lm_0$. 
This normalization is not merely convenient, but it plays a role 
in the last part of the proof. 

%In the proof of Theorem \ref{thm:rigidity}, we study $\mC(D')$ for sets $D'$ of area $|D'|=\pi$, 
%for any $\lm_0 > 0$. We drop the prime, and write $|D| = \pi$. 
%following, for any given $m$, we study the functional 
%$\mC(D')$ among smooth sets $D'$ of area $|D'|=\pi$. 
%We drop the prime, and write $|D| = \pi$. 
%where we have set 
%\[
%k_0=\frac{2 \sigma_0}{\alpha_0 ^2 r_m^3}.
%\]
%Thus since \eqref{main.identity} is equivalent to find the sets $D$ such that $|D|=\pi$ and $\mC(D)=0$, 
%the assumption that $D=D_\e$ is a nearly-disk for some $\e>0$ 
%allows to expand the functional $\mC$ in terms of the perturbation parameter $\e$ 
%and the graph function $h$ as in the Definition \ref{defn:nearly.spherical}. 
%Using perturbative analysis, we show that
%\[
%\mC(D_\e)\geq \frac{\pi}{k_0}(32 k_0^2-3) |D_\e \triangle D|^2,
%\]
%where $|D_\e \triangle D|= |D\setminus D_\e|+|D_\e \setminus D|$. 
%Therefore, if \eqref{Rayleigh.ratio} is satisfied, then $\mC(D_\e)\geq 0$
%with equality holding if and only if $D_\e$ is a disk,
%and thus the problem \eqref{eq:torsion} does not admit a solution, unless $D_e$ is a disk.

\medskip

\emph{Regularity, analyticity and Taylor remainders.}
We study the normalized identity \eqref{mC.D'.=.0} by means of its Taylor expansion 
around the case when $D'$ is the unit disc. 
We drop the prime, and write $\mC(D)$, $|D| = \pi$.  

First, iterating a bootstrap argument from \cite{BLL.1} 
based on identity \eqref{eq:torsion.3} and Schauder estimates, 
we observe that the boundary $\pa D$ is smooth, 
and the high norms of the elevation function $h$ are controlled 
by its $C^2$ norm (Lemma \ref{lemma:smooth}). 
Then we prove a formula, of independent interest, for the modulus of the gradient 
of the torsion function at the boundary in terms of the Dirichlet-Neumann map 
(Lemma \ref{lemma:torsion.DN}), and from this we quickly deduce its analytic dependence 
on $h$ in Sobolev norm (Lemma \ref{lemma:torsione.analitica}) 
and a cubic estimate for its Taylor remainder in norm $C^2$ (see \eqref{est.R.base}).  

\medskip

\emph{Taylor polynomials}. 
After estimating the Taylor remainders, 
we calculate the Taylor polynomial of order 2 
of the curvature (Lemma \ref{lemma:J.powers.expansion}),  
that of the gradient of the torsion function at the boundary (Lemma \ref{cor:v.t.taylor.exp}), 
which is obtained after calculating certain shape derivatives (Lemma \ref{lem:derivative}), 
and that of the functionals $\mA$ and $\mB$ (Lemma \ref{lemma:integral.perturbative.exp}).  
From these expansions we obtain the Taylor polynomial $T(h, \mC)$ of order 2 
of the functional $\mC$ as a function of the elevation $h$.  
Expanding $h$ in Fourier series (or, equivalently, in spherical harmonics), we find 
\[
T(h, \mC) = 2 \pi \sum_{\begin{subarray}{c} \ell \geq 2 \\ m=1,2 \end{subarray}} 
\ell (\ell -1) \Big(\ell+2-\frac{3 \lm_0}{8} \Big) h^2_{\ell,m},
\]
where $h_{\ell,m}$ are the coefficients of the expansion of $h$, 
see \eqref{Taylor.mC}. 
Thus, the first nonzero contribution to the Taylor expansion of $\mC$ around $h=0$ 
is quadratic in $h$. %  (no term linear in $h$ is present.  

\medskip

\emph{Quadratic lower estimate.} %  Conclusion of the proof.} 
For $(4 - \frac38 \lm_0) > 0$, 
which corresponds to condition \eqref{Rayleigh.ratio} in Theorem \ref{thm:rigidity}, 
the products $2\pi \ell (\ell -1) (\ell+2-\frac{3 \lm_0}{8} )$ 
are all positive, and they are $\geq \mu_0 \ell^3$ for some constant $\mu_0 > 0$, 
for all $\ell \geq 2$.
%The coefficients $h_{\ell,m}$ with $\ell=0, 1$, instead, do not appear 
%in the formula of $T(h, \mC)$. 
%This is a consequence of the invariances 
Thus, decomposing $h$ as the sum $h = a + b$, 
where $a$ contains all the modes $\ell \geq 2$ 
and $b$ those with $\ell = 0, 1$, see \eqref{def.a.b.h}, 
the Taylor expansion and the estimates proved so far give 
\[
\mC(D) \geq \mu_0 \| a \|_{H^{\frac32}(\S^1)}^2 - C \| h \|_{C^2(\S^1)}^3,
\]
see \eqref{end.arg.06}. 
Now a basic principle says that, for $h$ small, 
the quadratic term should dominate the cubic one.
%and therefore the identity $\mC(D) = 0$ should imply that $h = 0$. 
However, this principle does not directly apply for two reasons: 
\begin{itemize}
\item[$(i)$] $b$ is missing in the quadratic term, but it is present in the cubic one;
\item[$(ii)$] the $C^2$ norm is stronger than the $H^{\frac32}$ norm.
\end{itemize}
Hence we proceed in this way. 
For the $a$ component, 
we use the interpolation property of Sobolev norms, 
combined with the control of high norms of $h$ 
in terms of its $C^2$ norm in Lemma \ref{lemma:smooth},  
to get 
\[ 
\| a \|_{C^2(\S^1)} 
\leq C \| a \|_{H^{\frac32}(\S^1)}^{\frac23} 
\| h \|_{C^2(\S^1)}^{\frac13},
\]
see \eqref{end.arg.01}. 
For the % Concerning item $(i)$ and the 
$b$ component, we prove that 
\[
\| b \|_{C^2(\S^1)} 
\leq C \| b \|_{L^2(\S^1)} 
\leq C' \| a \|_{L^2(\S^1)}^2,
\]
see \eqref{end.arg.02}, \eqref{end.arg.03}. 
This estimate, which is linear in $b$ and quadratic in $a$, 
is directly related to the geometric assumptions on $D$: 
for the mode $\ell=0$, this is due to the area condition $|D| = \pi$, 
see \eqref{h.integral} and \eqref{storz.01}; 
for the mode $\ell=1$, it is due to the assumption 
that $D$ and the unit disc have the same barycenter, 
see \eqref{eq:ell=1}. 
From these estimates we obtain the quadratic lower estimate 
\[
\mC(D) \geq \frac{\mu_0}{2} \| a \|_{H^{\frac32}(\S^1)}^2
\]
for $\| h \|_{C^2(\S^1)}$ sufficiently small.  
Thus $\mC(D) = 0$ implies that $a=0$, whence $h=0$, 
namely $D$ is a disc. 
Since $D$ is a disc, $\Om$ and $u$ are easily determined (Remark 4.4 in \cite{BLL.1}). 

\medskip

\underline{Related literature}.
Axisymmetric equilibria for 3D capillary drops with constant vorticity 
were discovered experimentally by Plateau in \cite{Plateau} 
and studied by 
Poincaré \cite{Poincaré}, 
Rayleigh \cite{Rayleigh}, 
Chandrasekhar \cite{Chandrasekhar}, 
and, more recently, 
Wente \cite{Wente} 
and Lopez \cite{Lopez}. 

Free-boundary problems for fluids with constant vorticity have been studied 
in \cite{Wahlen.2007, Wahlen.2014, Pasquali} for the ocean, 
and \cite{LaS, BLL.1} for the drop.
Global solutions for fluids with constant vorticity have been obtained in 
\cite{Constantin.Strauss, Constantin.K.2009, Wahlen.2006, Fan.Gao.2021}, 
and in \cite{BBMM, BFM} for periodic and quasi-periodic travelling waves. 
Travelling drops and bubbles in a two-fluid interaction have been constructed in \cite{MNS}, 
where an overdetermined problem also appears. 
Travelling rotating waves for the capillary drop are studied in \cite{BJL, BLL, LaS.bif, LaS}.

Rigidity results for the 3D fluid dynamics 
are in \cite{Wahlen.2014} for the ocean with constant vorticity, 
in \cite{Peralta.Salas} for localizable Euler flows in bounded domains,  
and in \cite{BLL.1} for capillary drops with constant vorticity.

\medskip

\emph{Acknowledgements.} This work is supported by GNAMPA 
and by University of Naples Federico II through FRA 2024 \emph{Geometric Topics in Fluid Dynamics}.

\section{Proof}

To shorten the notation, we write $v, \nu$ instead of $v_{D}, \nu_{\pa D}$.  
The unit normal $\nu$ to the curve $\pa D$ at the point 
$\gamma(x) = x(1+h(x)) \in \pa D$, with $x \in \S^1$, is 
\begin{equation}\label{nu.epsilon.parametrization}
\nu \circ \gamma = \frac{(1+h)x - \grad_{\S^1} h}{J},
\quad \ 
J = J(h) :=	\sqrt{(1+h)^2 + |\grad_{\S^1} h|^2}
\end{equation}
(see, e.g., \cite{BJL}). 
%where 
%\begin{equation}
%\label{J.def}
%J = J(h) :=	\sqrt{(1+h)^2 + |\grad_{\S^1} h|^2}.
%\end{equation}
We denote $G(h) \psi$ the Dirichlet-Neumann operator 
\[
G(h)\psi := \la (\grad \Psi) \circ \gamma , \nu \circ \gamma \ra,
\]
where $\Psi$ is the unique solution of the Laplace problem with Dirichlet datum
\[
\Delta \Psi = 0 \ \text{in } D, \qquad 
\Psi = \psi \circ \gamma^{-1} \ \text{on } \pa D.
\]
For $h=0$, we denote $\mG := G(0)$. % the Dirichlet-Neumann operator of the unit disc $D_0$. 
The Dirichlet-Neumann operator for nearly spherical domains has been studied 
in \cite{BJL, BJL.Dir.Neu, LaS} in Sobolev class, 
including its analytical dependence on the elevation function $h$; 
estimates in H\"older spaces, including analyticity 
(convergence of series of multi-linear operators) are similar 
and, in some respects, slightly easier to prove. 
In particular, $\mG$ is a bounded linear operator of $C^{k+1,\alpha}(\S^1)$ into $C^{k,\alpha}(\S^1)$ 
for all nonnegative integer $k$, all $\alpha \in (0,1)$. 
%(unlike Sobolev spaces, $C^{k, \alpha}(\S^1)$ are closed for the product of two functions even for $k=0$). 
%the Dirichlet-Neumann operator of the unit disc $D_0$, namely
%\[
%(\mG \psi)(x) := \la \grad \Psi(x) , x \ra, \quad \ x \in \S^1,
%% \la \grad \Psi \circ \g_0,\nu_0\circ\g_0\ra
%\]
%where $\Psi$ is the unique bounded solution of the Dirichlet problem 
%\begin{equation*}\begin{cases}
%\Delta\Psi=0 
%& \text{in } D_0,
%\\
%\Psi=\psi 
%& \text{on } \S^1.
%\end{cases}
%\end{equation*}

Before starting with calculating Taylor expansions, 
we recall a higher regularity result from \cite{BLL.1}, 
obtained from \eqref{eq:torsion.3} by bootstrap, 
and we prove an identity involving the torsion function and the Dirichlet-Neumann operator.

\begin{lemma}[Solutions are smooth] 
\label{lemma:smooth}
Let $\Om \subset \R^3$ be a convex, open, bounded set with $C^2$ boundary $\pa \Om$. 
Let $u$ be a vector field in $\Om$ of class $C^2(\Om) \cap C(\overline{\Om})$. 
Assume that $(\Om, u)$ is a solution of \eqref{eq:system.time.indep}. 
Then the planar set $D$ has $C^\infty$ boundary, 
and its torsion function $v_D$ is $C^\infty(\overline{D})$. 
In particular, there exist $C,\delta > 0$ such that 
\[
\| h \|_{C^6(\S^1)} \leq C \| h \|_{C^2(\S^1)}
\]
for all functions $h$ in the ball $\| h \|_{C^2(\S^1)} \leq \delta$.
\end{lemma}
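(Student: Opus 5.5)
We will prove the lemma by a bootstrap on the overdetermined problem \eqref{torsion.pb}, \eqref{eq:torsion.3}, as in \cite{BLL.1}, keeping track of the constants so that the estimate is linear in $\|h\|_{C^2}$. We start from the facts already established in \cite{BLL.1}: $\Om$ convex with $C^2$ boundary gives $\pa D \in C^2$, and $|\grad v| \geq c_0 > 0$ on $\pa D$ by Hopf's lemma. The latter makes the term $\s_0 |\b| / |\grad v|$ in \eqref{eq:torsion.3} harmless.

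\textbf{Qualitative regularity.} Solving \eqref{eq:torsion.3} for the curvature gives
\[
H_{\pa D} = \frac{1}{\s_0}\Big( c - \frac{\a_0^2}{2} |\grad v|^2 - \frac{\s_0|\b|}{|\grad v|} \Big)
\quad \text{on } \pa D .
\]
The steps of the bootstrap are as follows.
\begin{itemize}
\item If $\pa D \in C^{k,\a}$ with $k \ge 2$, Schauder estimates for \eqref{torsion.pb} give $v \in C^{k,\a}(\overline D)$.
\item Hence $|\grad v|\in C^{k-1,\a}(\pa D)$, and the displayed formula gives $H_{\pa D} \in C^{k-1,\a}$.
\item Writing $\pa D$ as a graph over $\S^1$, the curvature is a quasilinear second-order elliptic expression in $h$, with principal part $-h''/J^3$. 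So $h \in C^{k+1,\a}$, i.e.\ $\pa D \in C^{k+1,\a}$.
\end{itemize}
To start, we need $\pa D \in C^{1,\a}$ from $C^2$. Then $v \in C^{1,\a}$, so $H_{\pa D} \in C^{0,\a}$ and $\pa D \in C^{2,\a}$, and the iteration runs. This gives $\pa D \in C^\infty$ and $v \in C^\infty(\overline D)$.

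\textbf{Quantitative estimate.} We rewrite the curvature equation as an equation for $h$ on $\S^1$. The curvature of the curve $x(1+h(x))$ is
\[
H = \frac{(1+h)^2 + 2h'^2 - (1+h)h''}{J^3}, \qquad \text{so} \qquad h'' + h = \mathcal N(h,h',h'') + \big(1 - H\circ\g\big)\cdot(\text{smooth in } h, h').
\]
Here $\mathcal N$ is at least quadratic. The right-hand side of the displayed formula equals $1$ at $h=0$ for the disc parameters, with constants $c,\b$ given by \eqref{eq:syst.beta.c,1}. For $\|h\|_{C^2}\le\delta$ these depend smoothly on $h$ (and on $\a_0,\s_0$), so $|c - c_0| + |\b-\b_0| \le C\|h\|_{C^2}$.

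The main quantity to control is $\big\||\grad v|\circ\g - \tfrac12\big\|_{C^{k-1,\a}(\S^1)}$. We plan to bound it by $C\|h\|_{C^{k,\a}}$, uniformly for $h$ small in $C^2$. To do this we pull the torsion problem back to the unit disc by a diffeomorphism $\Phi_h$ extending $x\mapsto x(1+h(x))$, for instance $\Phi_h(\rho x) = \rho x(1+\chi(\rho)h(x))$. This gives a uniformly elliptic operator whose coefficients are $C^{k-1,\a}$-close to the Laplacian, with distance $\lesssim \|h\|_{C^{k,\a}}$. The difference $w := v\circ\Phi_h - (1-|x|^2)/4$ then solves an equation with right-hand side $O(\|h\|_{C^{k,\a}})$ in $C^{k-2,\a}$, so Schauder gives $\|w\|_{C^{k,\a}} \le C\|h\|_{C^{k,\a}}$.

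Elliptic regularity for $h''+h$ on $\S^1$ does not invert modes $\ell=1$ (the kernel). We handle this by estimating $\|h\|_{C^{k+1,\a}} \le C(\|h''+h\|_{C^{k-1,\a}} + \|h\|_{C^0})$, with the $C^0$ term bounded by $\|h\|_{C^2}$. Combining the estimates, for small $\delta$:
\[
\|h\|_{C^{k+1,\a}} \le C_k\big(\|h\|_{C^{k,\a}} + \|h\|_{C^2}\big) .
\]
The nonlinear terms are absorbed using tame product estimates and $\|h\|_{C^2}\le\delta$. Iterating from $k=2$ (starting from $C^{1,\a}\subset C^2$) up to $C^{6}$ gives $\|h\|_{C^6}\le C\|h\|_{C^2}$.

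\textbf{Main obstacle.} The delicate step is the first rung, from $C^2$ to $C^{2,\a}$. The Schauder estimates on $v$ need boundary regularity with constants uniform in $h$, while the only control we have is $C^2$, not $C^{2,\a}$. We would handle it by using $C^{1,\a}$ Schauder estimates for $v$ on $C^{1,\a}$ domains, where the diffeomorphism $\Phi_h$ has coefficients controlled by $\|h\|_{C^{1,\a}}\le C\|h\|_{C^2}$. This yields $|\grad v|\in C^{0,\a}$ with the right bound, and from there the linear-in-$h$ bootstrap proceeds as above.
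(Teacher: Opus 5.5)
Your proposal is correct and follows the same route as the paper: a Schauder bootstrap on the overdetermined problem \eqref{torsion.pb}, \eqref{eq:torsion.3}, using that $|\grad v_D|$ is bounded away from zero on $\pa D$, with the quantitative bound obtained by tracking constants through the same iteration. The paper only sketches this by pointing to the bootstrap in \cite{BLL.1}; your pull-back to the disc, your treatment of the kernel of $h''+h$ through the $\|h\|_{C^0}$ term, and your $C^{1,\alpha}$ base step supply the details the paper leaves implicit.
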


\begin{proof} 
In \cite{BLL.1} it is proved that, when $(\Om, u)$ solves \eqref{eq:system.time.indep}, 
the torsion function of the equatorial section $D$ 
and the curvature of its boundary satisfy identity \eqref{eq:torsion.3}, 
which gives $H_{\pa D}$ as a function of $|\grad v_D|$. 
Moreover $C_0 \leq |\grad v_D| \leq C_1$ on $\pa D$, 
where $C_0, C_1$ are positive constants (Lemma 4.7 in \cite{BLL.1}). 
Hence the bootstrap argument at the end of the paper \cite{BLL.1}, 
based on Schauder regularity theory, 
can be iterated $n$ times, for any $n \in \N$. 
The estimate of the norm $\| h \|_{C^{k, \alpha}(\S^1)}$ for any $k$ 
in terms of $\| h \|_{C^2(\S^1)}$ can be proved by using the same ingredients, 
namely Schauder estimates and \eqref{eq:torsion.3}. 
\end{proof}

The $C^6$ norm of $h$ will be used 
in an interpolation argument at the end of the paper. 
In the next lemma we observe that the modulus of the gradient of the torsion function at the boundary 
can be easily expressed in terms of the elevation function $h$ and its Dirichlet-Neumann operator. 

\begin{lemma}[A formula for the torsion function and the Dirichlet-Neumann operator]
\label{lemma:torsion.DN}
The gradient of the torsion function $v$ at the boundary $\pa D$ 
and the Dirichlet-Neumann operator 
are related by the identity 
\[
|(\grad v) \circ \gamma| = \frac{(1+h)^2}{2J} - G(h)\psi, \quad \ 
\psi := \frac{(1+h)^2}{4} \quad \ \text{on } \S^1.
\]
\end{lemma}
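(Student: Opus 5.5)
The idea is to split the torsion function into an explicit radial part plus a harmonic correction. Set $w(x) := \frac{|x|^2}{4}$, so that $\Delta w = 1$ in $\R^2$. Then $v + w$ is harmonic in $D$, since $\Delta(v+w) = -1 + 1 = 0$. On $\pa D$ we have $v = 0$, so $v + w = w$ there. At the boundary point $\gamma(x) = x(1+h(x))$, with $x \in \S^1$, this gives
\[
w(\gamma(x)) = \frac{(1+h(x))^2}{4} = \psi(x).
\]
Hence $\Psi := v + w$ is exactly the harmonic extension in the definition of $G(h)\psi$. By definition,
\[
G(h)\psi = \la (\grad v)\circ\gamma , \nu\circ\gamma\ra + \la (\grad w)\circ\gamma, \nu\circ\gamma\ra .
\]

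Next I compute the term involving $w$. We have $\grad w(y) = y/2$, so $(\grad w)\circ\gamma = \frac{(1+h)x}{2}$. Using the formula \eqref{nu.epsilon.parametrization} for the normal, and the facts $|x| = 1$ and $\la x, \grad_{\S^1} h\ra = 0$ (the tangential gradient is orthogonal to $x$), I get
\[
\la (\grad w)\circ\gamma, \nu\circ\gamma\ra = \frac{(1+h)\la x, (1+h)x - \grad_{\S^1}h\ra}{2J} = \frac{(1+h)^2}{2J}.
\]

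Finally I identify the term involving $v$. Because $v$ vanishes on $\pa D$, its gradient there is normal to the boundary, so $\grad v = (\pa_\nu v)\,\nu$ on $\pa D$. The sign follows from positivity: $\Delta v = -1 \le 0$ with zero boundary data, so the maximum principle gives $v > 0$ in $D$. Hence $\pa_\nu v \le 0$ for the outer normal, and therefore $\la \grad v, \nu\ra = -|\grad v|$. Substituting this and the previous computation into the expression for $G(h)\psi$ and rearranging yields the stated identity.

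No step is a real obstacle. The only points needing care are the orthogonality $\la x,\grad_{\S^1}h\ra = 0$ and the sign of the normal derivative. The latter needs $v \in C^1(\overline D)$, which is guaranteed by Lemma \ref{lemma:smooth}, or more simply by Schauder theory on a $C^2$ domain.
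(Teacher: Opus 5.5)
Your proof is correct and follows the paper's argument essentially step for step: you use the same decomposition of $v + |x|^2/4$ into a harmonic function with boundary datum $\psi$, the same computation $\la (\grad w)\circ\gamma, \nu\circ\gamma\ra = (1+h)^2/(2J)$, and the same sign identity $\la \grad v, \nu\ra = -|\grad v|$ on $\pa D$. Your maximum-principle justification of that sign needs only $\pa_\nu v \le 0$, not $\grad v \neq 0$, and it is a small detail that the paper leaves implicit.
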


\begin{proof}
Let $v := v_D$. 
Let $w(x) := |x|^2/4$, $x \in \R^2$, 
and let $u := v+w$ in $D$. 
Then $\Delta u = \Delta v + \Delta w = 0$ in $D$, 
and, on the boundary,  
$u(x) = w(x) + v(x) = |x|^2 / 4$ for $x \in \pa D$. 
Hence 
\[
u \circ \gamma = \frac{ |\gamma|^2 }{4} = \frac{ (1 + h)^2 }{4} = \psi 
\quad \text{on } \S^1,
\]
and, by definition of the Dirichlet-Neumann operator, 
\[
\la (\grad u) \circ \gamma , \nu \circ \gamma \ra = G(h)\psi 
\quad \text{on } \S^1.
\]
On the other hand, $\grad u = \grad v + \grad w$.  
The torsion function $v$ satisfies $\nu = - \grad v / |\grad v|$ on $\pa D$, whence 
\[
\la (\grad v)\circ \gamma , \nu \circ \gamma \ra = - | (\grad v) \circ \gamma| 
\quad \text{on } \S^1.
\]
The function $w$ satisfies $\grad w(x) = x/2$ everywhere in $\R^2$, whence 
\[
\la (\grad w)\circ \gamma , \nu \circ \gamma \ra  
= \la \frac{\gamma}{2} , \nu \circ \gamma \ra
= \la \frac{ (1+h) x }{2} , \frac{(1+h) x - \grad_{\S^1} h}{J} \ra 
= \frac{(1+h)^2}{2J}
\quad \text{on } \S^1.
\]
From the identities above, the lemma follows. 
\end{proof}

The gradient of the torsion function at the boundary has a central role 
in the integral quantities we have to analyze. 
To indicate its dependence on the elevation function $h$, we define 
\begin{equation} \label{def.mgt}
\mgt(h) := |(\grad v) \circ \gamma| \quad \text{on } \S^1,
\end{equation}
where $v$ is the torsion function of the set $D$, 
and $\pa D = \gamma(\S^1)$, $\gamma(x) = x (1 + h(x))$.   
From the analyticity of the Dirichlet-Neumann operator in \cite{BJL.Dir.Neu}
and Lemma \ref{lemma:torsion.DN} we immediately deduce 
the analyticity of $\mgt$ in Sobolev class. 
A similar result could also be proved in H\"older spaces, 
but here, in fact, we do not need it, 
because, thanks to Lemma \ref{lemma:smooth}, 
it is more convenient to recover the H\"older regularity from the Sobolev one.

\begin{lemma}[Analyticity of $\ph$]
\label{lemma:torsione.analitica}
There exist $\delta_0 > 0$ such that, for any real $s \geq 1$, for any $h$ in the set 
\[
U := \{ h \in H^{s+1}(\S^1) : \| h \|_{H^2(\S^1)} < \delta_0 \},
\]
the function $\mgt(h)$ is in $H^s(\S^1)$. 
The constant $\delta_0$ is independent of $s$. 
The map $\mgt : h \mapsto \mgt(h)$ is analytic 
from $U$ (with the norm $\| \ \|_{H^{s+1}(\S^1)}$) to $H^s(\S^1)$. 
\end{lemma}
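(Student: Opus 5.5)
The plan is to read the statement off Lemma \ref{lemma:torsion.DN}, which gives
\[
\mgt(h) = \frac{(1+h)^2}{2J(h)} - G(h)\psi(h), \qquad \psi(h) = \frac{(1+h)^2}{4},
\]
and to show that each of the two terms is an analytic map from $U$ (with the $H^{s+1}$ norm) into $H^s(\S^1)$. The order is: first the algebraic term, then the Dirichlet--Neumann term, then the composition and product argument. The key point throughout is that the smallness condition depends only on a low norm, $\| h \|_{H^2}$, which is what makes $\delta_0$ independent of $s$.

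\emph{The term $(1+h)^2/(2J)$.} The map $h \mapsto (1+h)^2$ is a polynomial. It is analytic from $H^{s+1}$ to $H^{s+1}$, because $H^{s+1}(\S^1)$ is an algebra for $s+1 > 1/2$.

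For $J = \sqrt{(1+h)^2 + |\grad_{\S^1} h|^2}$ I would write
\[
J^2 = 1 + q(h), \qquad q(h) := 2h + h^2 + |\grad_{\S^1} h|^2 .
\]
Then $q$ is a polynomial map from $H^{s+1}$ into $H^s$, which is an algebra since $s \geq 1$. Next I apply $(1+q)^{-1/2} = \sum_k \binom{-1/2}{k} q^k$, which converges in $H^s$. For this I use the tame algebra estimate
\[
\| fg \|_{H^s} \leq C_s \big( \| f \|_{H^s} \| g \|_{L^\infty} + \| f \|_{L^\infty} \| g \|_{H^s} \big),
\]
which gives $\| q^k \|_{H^s} \leq k\, C_s^{k} \| q \|_{L^\infty}^{k-1} \| q \|_{H^s}$. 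Since $\| q \|_{L^\infty} \lesssim \| h \|_{W^{1,\infty}} \lesssim \| h \|_{H^2}$, the series converges as soon as $\| q \|_{L^\infty} < 1/2$, say. This requires $\| h \|_{H^2}$ small with a constant independent of $s$. The factor $C_s^k$ is harmless because the geometric ratio involves $\| q \|_{L^\infty}$ rather than the $H^s$ norm; I would absorb it by using a Moser-type composition estimate for the analytic function $(1+\cdot)^{-1/2}$ on $\{ |q| < 1/2 \}$. Multiplying the two pieces gives an analytic map into $H^s$.

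\emph{The term $G(h)\psi(h)$.} I would invoke the analyticity of the Dirichlet--Neumann operator from \cite{BJL.Dir.Neu}. Its content is that, for $h$ in a ball of $H^{s+1}$ whose radius is measured in a fixed low norm, $h \mapsto G(h)$ is analytic with values in bounded operators $H^{s+1} \to H^s$, with tame bounds. Composing with the analytic map $h \mapsto \psi(h) \in H^{s+1}$ and using bilinearity of the evaluation $(G,\psi) \mapsto G\psi$ gives analyticity of $h \mapsto G(h)\psi(h)$ into $H^s$.

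\emph{Main obstacle.} The main difficulty is the uniformity of $\delta_0$ in $s$. One has to check that the convergence radius in \cite{BJL.Dir.Neu} (equivalently in \cite{BJL, LaS}) is governed only by a low norm such as $\| h \|_{H^2}$, or $\| h \|_{W^{1,\infty}}$, while higher norms enter only linearly through tame estimates. If the cited statement only gives a radius depending on $s$, I would redo the convergence estimate for the Neumann-series/multilinear expansion of $G(h)$. I would bound the $n$-th multilinear term by $C^n \| h \|_{H^2}^{n-1} \| h \|_{H^{s+1}} \| \psi \|_{H^{s+1}}$ with $C$ independent of $s$, placing at most one factor in the high norm. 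If needed, I would first prove the result for a smaller $\delta_0$ at low $s$ and then propagate it to higher $s$ by an elliptic regularity bootstrap. Note also that $\mgt(h) \in H^s$ for $h \in U$ is then automatic from the convergent expansion.
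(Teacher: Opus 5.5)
Your proposal is correct and follows the paper's route. The paper also combines Lemma \ref{lemma:torsion.DN} with Theorem 1.1 of \cite{BJL.Dir.Neu} (with $n=2$, $s_0=1$, which is where the $s$-independent smallness in $\|h\|_{H^2}$ comes from), and it simply cites Lemmas 2.6 and 4.11 of that paper for $(1+h)^2$ and $J^{-1}$, which you instead handle by hand. In that hand treatment, the iterated bound $k\,C_s^{k}\|q\|_{L^\infty}^{k-1}\|q\|_{H^s}$ alone only gives a radius of order $1/C_s$. So the sharper estimate you allude to (growth polynomial in $k$, or equivalently the spectral radius of $q$ in the algebra $H^s$ being $\|q\|_{L^\infty}$) is genuinely needed for $\delta_0$ to be independent of $s$.
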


\begin{proof} 
Apply Theorem 1.1 of \cite{BJL.Dir.Neu} with $n=2$, $s_0 = 1$, $s \geq 1$ 
for the analyticity of the Dirichlet-Neumann operator,  
Lemmas 2.6 and 4.11 of \cite{BJL.Dir.Neu} to estimate $h^2$ and $J^{-1}$, 
and Lemma \ref{lemma:torsion.DN}.% for the formula of $\mgt(h)$. 
\end{proof}

From Lemma \ref{lemma:torsione.analitica}, 
by the standard theory of analytic maps between Banach spaces,  
we immediately obtain the Taylor expansion in Sobolev spaces 
\begin{equation} \label{Taylor.mgt}
\mgt(h) = \mgt(0) + \mgt'(0)[h] + \frac12 \mgt''(0)[h,h] + R(h), 
\quad \ 
\| R(h) \|_{H^s(\S^1)} \leq C \| h \|_{H^{s+1}(\S^1)}^3 
\end{equation} 
for all $\| h \|_{H^{s+1}(\S^1)} \leq \delta_s$, for some $\delta_s < \delta_0$.  
We observe that from \cite{BJL.Dir.Neu} a stronger, ``tame'' estimate 
for the remainder $R(h)$ could be obtained, 
but here, also thanks to Lemma \ref{lemma:smooth}, 
\eqref{Taylor.mgt} is sufficient for our purpose.
In fact, by Sobolev embedding, one has 
$\| R(h) \|_{C^0(\S^1)} \leq C \| R(h) \|_{H^1(\S^1)}$, 
then we apply estimate \eqref{Taylor.mgt} with $s=1$, 
and then the trivial inequality 
$\| h \|_{H^2(\S^1)} \leq C \| h \|_{C^2(\S^1)}$, 
and we obtain that the Taylor remainder of $\mgt$ satisfies
\begin{equation} \label{est.R.base}
\| R(h) \|_{C^0(\S^1)} \leq C \| h \|_{C^2(\S^1)}^3. 
\end{equation}

Also $J : U \to H^s(\S^1)$ 
and $H_{\pa D} \circ \gamma : U \to H^{s-1}(\S^1)$ 
are analytic functions of $h$, 
as one can immediately deduce from the formula of $J$ in \eqref{nu.epsilon.parametrization} 
and from that of the curvature 
\begin{equation}\label{mean.curvature.def}
H_{\pa D} \circ \gamma
= \frac1{J} 
- \frac{\Delta_{\S^1}h}{(1+h) J} 
+ \frac{|\grad_{\S^1}h|^2}{J^3} 
+ \frac{\la(D_{\S^1}^2 h) \grad_{\S^1} h, \grad_{\S^1} h \ra}{(1+h) J^3},
\end{equation}
see, e.g., \cite{BJL}. 
Hence $J$ and $H_{\pa D} \circ \gamma$ satisfy a similar Taylor expansion as \eqref{Taylor.mgt}
(for the curvature, the remainder is estimated in $H^{s-1}$ norm). 
The Taylor remainder of $J$ satisfies \eqref{est.R.base}, just like that of $\mgt$ does.
The Taylor remainder of $H_{\pa D} \circ \gamma$ satisfies, 
using also Lemma \ref{lemma:smooth}, 
\begin{align} 
\| R(h) \|_{C^0(\S^1)} 
\leq C \| R(h) \|_{H^1(\S^1)} 
& \leq C \| h \|_{H^3(\S^1)}^3 
%\notag \\ & 
\leq C \| h \|_{C^3(\S^1)}^3 
\leq C \| h \|_{C^2(\S^1)}^3,
\label{est.R.catena.1}
\end{align}  
which is, again, \eqref{est.R.base}.

%Now we start with calculating Taylor expansions. 
%To make this paper self-contained, we give here another   
%This is classical for nearly flat domains (see, e.g., \cite{Lannes.book}). 
%For nearly spherical sets, it has been proved in \ref{BJL.Dir.Neu} in Sobolev class. 
%It is also true for H\"older spaces, and the proof is very similar (in fact, slightly easier). 
%By Lemma \ref{lemma:torsion.DN}, the function $|(\grad v) \circ \gamma|$ on $\S^1$ 
%also depends analytically on $h$.  
%For functions defined on $\S^1$, we introduce the short notation 
%\[
%\| h \|_k := \| h \|_{C^{k,\alpha}(\S^1)},
%\]
%where $\alpha \in (0,1)$ is fixed. 

Now that the Taylor remainders of $\mgt, J, H_{\pa D} \circ \gamma$ of order 2 
have been proved to satisfy \eqref{est.R.base}, we have to calculate the Taylor polynomials 
\begin{equation} \label{def.Taylor.polynomial}
T(h,f) := f(0) + f'(0)[h] + \frac12 f''(0)[h,h]
\end{equation}
for $f = \mgt, J, H_{\pa D} \circ \gamma$. 

\begin{lemma}[Taylor polynomials of $J$ and $H_{\pa} \circ \gamma$]
\label{lemma:J.powers.expansion}
For $p \in \{ 1, -1, -3\}$, the Taylor polynomials of $J^p$ 
and $H_{\pa} \circ \gamma$ are 
\begin{align} 
T(h, J^p) & = 1 + p h + \frac{p(p-1)}{2} h^2 + \frac{p}{2} |\grad_{\S^1} h|^2, %  + R_p(h), 
\label{J.expansion}
\\ 
T(h, H_{\pa D} \circ \gamma) 
& = 1 - (h + \Delta_{\S^1} h) 
+ \Big( h^2 + \frac12|\grad_{\S^1}h|^2 + 2h\Delta_{\S^1}h \Big). %  + R(h),
\label{mean.curvature.perturbative.expansion}
\end{align}
%where the remainders satisfy \eqref{est.R.base} for all $\| h \|_{C^2(\S^1)} < \delta$. 
\end{lemma}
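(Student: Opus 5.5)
Both expansions come from a direct second-order computation in $h$. The plan is to treat $h$, $\grad_{\S^1} h$, $\Delta_{\S^1} h$, $D^2_{\S^1} h$ as small quantities of first order and keep every term up to total degree two. Since these maps are analytic in $h$ (as observed after Lemma \ref{lemma:torsione.analitica}), $T(h,f)$ is exactly the degree $\le 2$ part of the formal expansion. So it suffices to expand the explicit formulas \eqref{nu.epsilon.parametrization} and \eqref{mean.curvature.def} pointwise with elementary power series, and discard all terms of degree at least three.

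For $J^p$, write $J^2 = (1+h)^2 + |\grad_{\S^1} h|^2 = 1 + X$ with
\[
X := 2h + h^2 + |\grad_{\S^1} h|^2,
\]
so that $J^p = (1+X)^{p/2}$. Using $(1+X)^{p/2} = 1 + \frac p2 X + \frac{p(p-2)}{8} X^2 + O(X^3)$ and $X^2 = 4h^2 + O(|h|^3)$, the quadratic part is
\[
\frac p2 \big( h^2 + |\grad_{\S^1} h|^2 \big) + \frac{p(p-2)}{2} h^2
= \frac{p(p-1)}{2} h^2 + \frac p2 |\grad_{\S^1} h|^2 .
\]
Together with the linear term $ph$, this gives \eqref{J.expansion}. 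The computation holds for any real $p$, in particular for $p = 1,-1,-3$.

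For the curvature, expand each of the four terms of \eqref{mean.curvature.def} separately.
\begin{itemize}
\item The first term is $1/J$, whose expansion is \eqref{J.expansion} with $p=-1$: $1 - h + h^2 - \frac12 |\grad_{\S^1} h|^2$.
\item For the second term, $\frac{1}{(1+h)J} = (1+h)^{-1} J^{-1} = 1 - 2h + O(2)$. Since $\Delta_{\S^1} h$ is already first order, $-\frac{\Delta_{\S^1} h}{(1+h)J} = -\Delta_{\S^1} h + 2h \Delta_{\S^1} h + O(3)$.
\item The third term is $|\grad_{\S^1} h|^2 J^{-3} = |\grad_{\S^1} h|^2 + O(3)$.
\item The fourth term is cubic, hence it does not contribute.
\end{itemize}
Summing, the constant part is $1$, the linear part is $-h - \Delta_{\S^1} h$, and the quadratic part is
\[
h^2 - \tfrac12 |\grad_{\S^1} h|^2 + |\grad_{\S^1} h|^2 + 2h \Delta_{\S^1} h
= h^2 + \tfrac12 |\grad_{\S^1} h|^2 + 2h \Delta_{\S^1} h ,
\]
which is \eqref{mean.curvature.perturbative.expansion}.

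The step that needs care is justifying that these formal truncations are indeed $f(0) + f'(0)[h] + \frac12 f''(0)[h,h]$. This holds because each map is a composition of the analytic maps $h \mapsto h$, $\grad_{\S^1} h$, $D^2_{\S^1} h$ with real-analytic functions of these quantities near $(0,0,0)$. Its Fréchet derivatives at $0$ are therefore obtained by differentiating $t \mapsto f(th)$ at $t=0$, and that is exactly the coefficient extraction carried out above. The remainders are already controlled by \eqref{est.R.base} and \eqref{est.R.catena.1}, so nothing further is needed.

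As a sanity check, I would verify that the linear part $-(h + \Delta_{\S^1} h)$ vanishes on the modes $\ell = 1$, i.e. on translations. This agrees with the known linearization of curvature on the circle.
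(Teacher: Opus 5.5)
Your proposal is correct and follows the paper's own route: the paper's proof is just the direct expansion of the explicit formulas for $J$ and $H_{\pa D}\circ\gamma$, keeping terms of total degree at most two, and all your coefficients check out. One cosmetic nit: $X^2 - 4h^2$ contains the term $4h|\grad_{\S^1} h|^2$, so it is cubic in $(h,\grad_{\S^1} h)$ jointly rather than $O(|h|^3)$; this does not affect anything.
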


\begin{proof} 
Straighforward calculation with formulas \eqref{nu.epsilon.parametrization} of $J$ 
and \eqref{mean.curvature.def} of $H_{\pa D} \circ \gamma$.
%One has $J^p(h) = (1 + 2h + h^2 + |\grad_{\S^1} h|^2)^{p/2}$.
%Use the power series $(1+a)^q = \sum_{n=0}^\infty \binom{q}{n} a^n$ 
%with $a = 2h + h^2 + |\grad_{\S^1} h|^2$, which converges totally in norm $\| \ \|_1$ 
%for $\| h \|_2$ sufficiently small.  
%The curvature of $\pa D$ is 
%\begin{equation}\label{mean.curvature.def}
%H_{\pa D} 
%= \frac1{J} 
%- \frac{\Delta_{\S^1}h}{(1+h) J} 
%+ \frac{|\grad_{\S^1}h|^2}{J^3} 
%+ \frac{\la(D_{\S^1}^2 h) \grad_{\S^1} h, \grad_{\S^1} h \ra}{(1+h) J^3},
%\end{equation}
%see, e.g., \cite{BJL}. 
%Thus, by \eqref{J.expansion}, a simple computation yields 
%\eqref{mean.curvature.perturbative.expansion}.
\end{proof}

Now we calculate the Taylor polynomial of $\ph = |(\grad v) \circ \gamma|$. 
%  around $h=0$ of order 2. 
%Since $G(h)\psi$ is linear in $\psi$ and depends analytically on $h$, 
%by Lemma \ref{lemma:torsion.DN} $|(\grad v) \circ \gamma|$ on $\S^1$ is analytic in $h$. 
By Lemma \ref{lemma:torsion.DN}, its expansion %Taylor polynomial of order 2 
could be deduced from the formula 
of the shape derivative of the Dirichlet-Neumann operator at zero. 
More precisely, since $G(h) a = 0$ for any constant $a \in \R$, 
one has $G(h) [\frac14 (1+h)^2] = \frac12 G(h)h + \frac14 G(h)(h^2)$, 
%\[
%G(h) \psi 
%= G(h) \Big( \frac{ 1 + 2h + h^2 }{4} \Big) 
%= \frac{G(h)h}{2} + \frac{G(h)(h^2)}{4}
%\]
and the Taylor polynomial of degree 2 of $G(h)h$ is $G(0)h + G'(0)[h] h$, 
while that of $G(h)(h^2)$ is $G(0) (h^2)$. 
Here we propose another way of calculating that polynomial, 
where the shape derivative of the torsion function 
is calculated by differentiating with respect to a parameter $\e$ 
some functions defined on a set $D_\e$ depending on $\e$, 
without explicitly reformulating the problem in terms of functions defined on a fixed 
(i.e., independent of $\e$) domain. 
Note that, by Lemma \ref{lemma:smooth},
the functions and the boundaries involved are smooth. 
All details of the rigorous justification for this approach to shape derivative
%(which is ultimately based on reformulating the problem on a fixed domain)  
can be found, e.g., in \cite{Henrot.Pierre.book}. 
Moreover, since $\ph$ is analytic, its Fréchet and Gateaux derivatives coincide, 
so that, to calculate the Taylor polynomial $T(h,\ph)$, 
it is sufficient to fix $h$ and to calculate the Taylor polynomial 
of the function $\e \mapsto \ph(\e h)$ around $\e = 0$. 
We also observe that the map $h \mapsto (\grad v) \circ \gamma$ 
is analytic because the torsion function satisfies $\grad v = - |\grad v| \nu$ 
on $\pa D$, and both $\ph = |(\grad v) \circ \gamma|$ 
and $\nu \circ \gamma$ are analytic.

\begin{lemma}[Shape derivatives of the torsion function]
\label{lem:derivative}
Let $h\in C^{3,\alpha}(\S^1)$, with $|h| \leq 1/2$. 
For $|\e| < 1$, denote $D_\e$ the nearly circular set of elevation function $\e h$, 
that is, with boundary $\pa D_\e = \{ \gamma_\e(x) : x \in \S^1 \}$, 
where $\gamma_\e(x) := x (1 + \e h(x))$, 
and denote $v_\e$ the torsion function of $D_\e$. 
%and $\e,\e_0$ as in Definition \ref{defn:nearly.spherical}.
Denote 
\[
g(\e) := (\grad v_\e) \circ \gamma_\e 
\quad \text{on } \S^1. 
\]
Then % , on $\S^1$, 
\begin{align}
%\frac{d}{d\e} \Big|_{\e=0} \{ (\nabla v_\e) \circ \g_\e \} 
g'(0) & = \frac{\grad_{\S^1}h}{2} + \frac{ (\mG h) - h }{2} x
\quad \text{on } \S^1,
\label{first.perturbative.derivative.0}
\\
%\la \frac{d^2}{d\e^2} \Big|_{\e=0} \{ (\nabla v_\e) \circ \g_\e \} , \nabla v_0 \ra 
\la g''(0) , g(0) \ra 
& = - \frac{\mG (h^2)}{4} 
+ \frac{ \mG(h \mG h) }{2} 
+ \frac{ h \Delta_{\S^1} h }{2} 
+ \frac{ h \mG h }{2} 
\quad \text{on } \S^1.
\label{second.perturbative.derivative}
\end{align}
\end{lemma}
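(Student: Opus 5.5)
We argue by differentiating in $\e$ identities that hold on the moving boundary, and we use that everything is explicit for the unit disc. For $\e=0$ the torsion function is $v_0(x)=(1-|x|^2)/4$. Hence $\grad v_0(x)=-x/2$, $D^2 v_0=-\tfrac12 I$ and $D^3 v_0=0$; in particular $g(0)=-x/2$ on $\S^1$. Following the shape-derivative framework of \cite{Henrot.Pierre.book}, we extend $v_\e$ smoothly to a fixed neighbourhood of $\overline{D_0}$. The regularity $h\in C^{3,\alpha}$ and Schauder theory make this extension $C^{2,\alpha}$ up to the boundary, uniformly and smoothly in $\e$. We then write $v_\e=v_0+\e v'+\frac{\e^2}{2}v''+o(\e^2)$, where $v'=\pa_\e v_\e|_{\e=0}$ and $v''=\pa^2_\e v_\e|_{\e=0}$. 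Differentiating $\Delta v_\e=-1$ in $D_\e$ shows that $v'$ and $v''$ are harmonic in the unit disc $D_0$. Their boundary values come from differentiating the identity $v_\e(\gamma_\e(x))=0$ on $\S^1$, where $\pa_\e\gamma_\e=xh$ and $\pa_\e^2\gamma_\e=0$.

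\emph{First derivative.} Differentiating $v_\e\circ\gamma_\e=0$ once at $\e=0$ gives $v'+\la\grad v_0,xh\ra=0$ on $\S^1$, so $v'=h/2$ on $\S^1$. Thus $v'$ is the harmonic extension of $h/2$. On $\S^1$ its gradient splits into a tangential part $\grad_{\S^1}h/2$ and a radial part $(\mG h/2)\,x$. By the chain rule,
\[
g'(0)=\grad v'+D^2v_0[xh]=\grad v'-\tfrac{h}{2}x ,
\]
which is \eqref{first.perturbative.derivative.0}.

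\emph{Second derivative.} Differentiating $v_\e\circ\gamma_\e=0$ twice gives
\[
v''+2\la\grad v',xh\ra+D^2v_0[xh,xh]=0 \quad\text{on } \S^1 .
\]
Since $\la\grad v',x\ra=\mG h/2$ and $D^2v_0[xh,xh]=-h^2/2$, this yields $v''=\frac{h^2}{2}-h\,\mG h$ on $\S^1$, with $v''$ harmonic. Next, using $D^3v_0=0$ and $\pa^2_\e\gamma_\e=0$,
\[
g''(0)=\grad v''+2D^2v'[xh].
\]
Pairing with $g(0)=-x/2$ gives
\[
\la g''(0),g(0)\ra=-\tfrac12\big(\mG v''+2h\,\pa_{rr}v'\big).
\]
For the first term we use linearity: $\mG v''=\tfrac12\mG(h^2)-\mG(h\mG h)$. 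For the second term we write the Laplacian of $v'$ in polar coordinates at $r=1$, namely $\pa_{rr}v'+\pa_r v'+\Delta_{\S^1}v'=0$. This gives $\pa_{rr}v'=-\tfrac12\mG h-\tfrac12\Delta_{\S^1}h$. Substituting both expressions produces exactly the four terms in \eqref{second.perturbative.derivative}.

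The computations are routine once the expansion of $v_\e$ is justified. The main obstacle is the rigorous justification of differentiating the extended $v_\e$ twice in $\e$ up to the boundary, and of commuting derivatives with composition by $\gamma_\e$. For this we rely on the standard transport-to-fixed-domain argument in \cite{Henrot.Pierre.book}: pull back by a diffeomorphism extending $\gamma_\e$ and apply the implicit function theorem in $C^{2,\alpha}$. An alternative is the analyticity of $\ph$ and $\nu\circ\gamma$ already noted before the lemma, which guarantees that Gateaux derivatives in $\e$ exist and can be computed as above.
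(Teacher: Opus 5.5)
Your proposal is correct and follows the paper's proof essentially step by step. You differentiate $v_\e\circ\gamma_\e=0$ twice to get the boundary data $h/2$ and $\frac12 h^2-h\,\mG h$ of the harmonic functions $\dot v_0,\ddot v_0$. You then use $D^3v_0=0$ to reduce $g''(0)$ to $\grad\ddot v_0+2h(\grad^2\dot v_0)x$, and evaluate $\la(\grad^2\dot v_0)x,x\ra$ through the polar form of the Laplacian on $\S^1$.

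The only differences are cosmetic. You pair with $g(0)=-x/2$ directly, so you never need the tangential part of $\grad\ddot v_0$, which the paper writes out. You also make explicit the fixed-neighbourhood extension behind the shape-derivative computation, for which the paper likewise defers to Henrot--Pierre.
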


\begin{proof}
The torsion function of the unit ball $v_0\colon\,D_0\to \R$ is explicitly given by the formula
\begin{equation} \label{explicit.solution.Laplacian.ball}
v_0(x)= \frac{1-|x|^2}{4}
\quad \text{in } D_0.
\end{equation}
Since $v_\e\circ\g_\e(x)=0$ for all $\e \in (-1,1)$, all $x\in\S^1$, 
differentiating this identity in $\e$ we have
\begin{equation}\label{eq:first.derivative}
\dot v_\e \circ \gamma_\e + \la \nabla v_\e \circ \gamma_\e , x \ra h = 0
\quad \text{on } \S^1.
\end{equation}
Differentiating \eqref{eq:first.derivative} again with respect to $\e$, we get
\begin{equation}\label{eq:second.derivative}
\ddot v_\e \circ \gamma_\e  
+ 2 \la (\nabla \dot v_\e) \circ\g_\e , x \ra h 
+ \la ((\nabla^2 v_\e) \circ \g_\e) x, x \ra h^2 = 0
\quad \text{on } \S^1.
\end{equation}
Evaluating \eqref{eq:first.derivative} and \eqref{eq:second.derivative} at $\e=0$, 
on $\S^1$ we obtain, by \eqref{explicit.solution.Laplacian.ball},
\begin{equation}\label{eq:part.derivative}
\dot v_0 = - \la \nabla v_0 , x \ra h = \frac12 h,
\quad \ 
\ddot v_0 = \frac12 h^2 - 2 h \la \nabla \dot v_0, x \ra
\quad \text{on } \S^1.
\end{equation}
We decompose $\nabla \dot v_0$ in its tangential and normal part to find 
\begin{equation}\label{eq:derivative.gradient}
\nabla \dot v_0=\frac{\nabla_{\S^1} h+ (\mG h)x }{2}
\quad \text{on } \S^1,
\end{equation}
where we use the fact that $\Delta \dot v_0 = 0$ in $D_0$ 
and $\dot v_0 = h/2$ on $\pa D_0 = \S^1$, i.e.,   
$\dot v_0$ is the harmonic extension to $D_0$ of the function $\frac12 h$,
so that $\mG (h/2) = \la \grad \dot v_0 , x \ra$ on $\S^1$ 
by the definition of the Dirichlet-Neumann operator $\mG = G(0)$. 
By \eqref{eq:part.derivative} and \eqref{eq:derivative.gradient}, 
we get
\begin{equation} \label{ddot.v0}
\ddot v_0 = \frac{h^2 - 2 h \mG h}{2}, \qquad 
\nabla \ddot v_0 = \frac{ \mG (h^2-2 h \mG h) \, x }{2} + \frac{ \nabla_{\S^1}( h^2-2h \mG h) }{2} 
\quad \text{on } \S^1,
\end{equation}
where we have used the fact that $\Delta \ddot v_0 = 0$ in $D_0$ 
and $\ddot v_0 = (h^2 - 2 h \mG h)/2 =: \psi$ on $\S^1$, i.e., 
$\ddot v_0$ is the harmonic extension of $\psi$ to $D_0$, %  $(h^2 - 2 h \mG h)/2$, 
so that $\la \grad \ddot v_0 , x \ra = \mG \psi$ on $\S^1$, by the definition of $\mG$. 
Differentiating the gradient of $v_\e$ at $\gamma_\e(x)$ with respect to $\e$ gives
\[
g'(\e) = \frac{d}{d\e} \{ (\nabla v_\e) \circ \g_\e \} 
= (\nabla \dot v_\e) \circ \g_\e + h ((\nabla^2 v_\e) \circ \g_\e) x 
\quad \text{on } \S^1.
\]
For $\e = 0$, using \eqref{explicit.solution.Laplacian.ball} and \eqref{eq:derivative.gradient},
we obtain \eqref{first.perturbative.derivative.0}. 
Differentiating again with respect to $\e$,  
\[ %\begin{equation} \label{s}
g''(\e) 
= \frac{d^2}{d\e^2} \{ (\nabla v_\e) \circ \g_\e \} 
= (\nabla \ddot v_\e) \circ \g_\e 
+ 2 h ((\nabla^2 \dot v_\e) \circ \g_\e) x
+ h^2 ((\nabla^3 v_\e) \circ \g_\e) x x,
\] % \end{equation}
and, at $\e = 0$, 
\begin{equation} \label{s.0}
g''(0) % \frac{d^2}{d\e^2} \Big|_{\e = 0} \{ (\nabla v_\e) \circ \g_\e \} 
= \nabla \ddot v_0
+ 2 h (\nabla^2 \dot v_0) x
\quad \text{on } \S^1,
\end{equation}
because $v_0$ is a polynomial of degree 2 in $x$, 
and all its partial derivatives of order 3 vanish. 
The term $\grad \ddot v_0$ on $\S^1$ is in \eqref{ddot.v0}. 
To compute the Hessian of $\dot v_0$, we recall that 
any function $u\in C^2(\overline D_0)$ satisfies 
\begin{equation}\label{eq:laplacian}
\Delta u = \Delta_{\S^1} u + \la \nabla u , x \ra + \la (\nabla^2 u) x , x \ra 
\quad \text{on } \S^1.
\end{equation}
By standard elliptic regularity theory, it is immediate to show that $h\in C^{3.\alpha}(\S^1)$ 
implies that %$\dot v_0 \in C^{2}(\overline D_0)$, because at the boundary it holds 
$\dot v_0\in  C^{2,\alpha}(\overline D_0)$.
Therefore identity \eqref{eq:laplacian} applies to the function $\dot v_0$, and we get
\begin{equation} \label{Hess.dot.v0}
\la (\nabla^2 \dot v_0) x, x \ra 
= - \Delta_{\S^1} \dot v_0 - \la \grad \dot v_0 , x \ra 
= - \frac{ \Delta_{\S^1} h + \mG h }{2} 
\quad \text{on } \S^1,
\end{equation}
because $\Delta \dot v_0 = 0$, 
while $\dot v_0$ and $\grad \dot v_0$ on $\S^1$ are given by 
\eqref{eq:part.derivative}, \eqref{eq:derivative.gradient}. 
By \eqref{s.0}, \eqref{ddot.v0}, \eqref{Hess.dot.v0}, we obtain 
\[
\la g''(0) , x \ra 
= \frac{\mG (h^2)}{2} - \mG(h \mG h) - h \Delta_{\S^1} h - h \mG h 
\quad \text{on } \S^1.
\]
Since $\grad v_0 = -x/2$, the lemma is proved. 
\end{proof}

\begin{lemma}[Taylor polynomial of $\ph$] 
\label{cor:v.t.taylor.exp}
The Taylor polynomial of $\ph^2$ is 
\begin{equation} \label{grad.square.exp}
T(h,\ph^2) = \frac14 + \mF_1(h) + \mF_2(h),
\end{equation} 
that of $\ph^q$, $q \in \R$, is 
\begin{equation} \label{Taylor.ph.q}
T(h, \ph^q) = \frac{1}{2^q} \Big[ 1 + 2 q \mF_1(h) 
+ 2 q \mF_2(h) + 2 q (q-2) \mF_1^2(h) \Big],
\end{equation}
where 
\begin{align}
\mF_1(h) & = \frac{h-\mG h}{2}, 
\label{eq:def.mA.1}
\\
\mF_2(h) & = \frac14 \Big( 
- \mG(h^2) 
+ 2 \mG (h \mG h) 
+ 2 h \Delta_{\S^1} h 
+ |\grad_{\S^1} h|^2
+ (\mG h)^2
+ h^2
\Big).
\label{eq:def.mA.2}
\end{align}
\end{lemma}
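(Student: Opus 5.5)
Since $\ph$ is analytic (Lemma \ref{lemma:torsione.analitica}), it suffices to fix $h$ and expand $\e \mapsto \ph(\e h)^2$ to second order at $\e=0$. I will write $\ph(\e h)^2 = |g(\e)|^2$ with $g$ as in Lemma \ref{lem:derivative}; the needed smoothness of $h$ is available by Lemma \ref{lemma:smooth}. Then
\[
\frac{d}{d\e}|g|^2 = 2\la g', g\ra, \qquad \frac{d^2}{d\e^2}|g|^2 = 2|g'|^2 + 2\la g'', g\ra ,
\]
so that
\[
T(h,\ph^2) = |g(0)|^2 + 2\la g'(0), g(0)\ra + |g'(0)|^2 + \la g''(0), g(0)\ra .
\]
All four quantities are provided by Lemma \ref{lem:derivative} together with $g(0) = \grad v_0 = -x/2$ on $\S^1$, which follows from \eqref{explicit.solution.Laplacian.ball}.

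\textbf{Step 1 (zeroth and first order).} First, $|g(0)|^2 = 1/4$. Next, from \eqref{first.perturbative.derivative.0}, the tangential part $\frac12\grad_{\S^1}h$ is orthogonal to $x$, so
\[
2\la g'(0), -x/2\ra = -\frac{\mG h - h}{2} = \frac{h - \mG h}{2} = \mF_1(h).
\]

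\textbf{Step 2 (second order).} Orthogonality of the tangential and normal parts again gives
\[
|g'(0)|^2 = \frac14|\grad_{\S^1}h|^2 + \frac14(\mG h - h)^2 .
\]
Adding \eqref{second.perturbative.derivative} and expanding $(\mG h - h)^2 = (\mG h)^2 - 2h\mG h + h^2$, the terms $\pm\frac12 h\mG h$ cancel. What remains is exactly $\mF_2(h)$ as in \eqref{eq:def.mA.2}, which proves \eqref{grad.square.exp}. One also has to keep track of the factor $\frac12$ in the definition \eqref{def.Taylor.polynomial}: it is absorbed because $\frac12\cdot\frac{d^2}{d\e^2}|g|^2 = |g'|^2 + \la g'', g\ra$.

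\textbf{Step 3 (general power $q$).} I write $\ph^q = (\ph^2)^{q/2}$ and $\ph^2 = \frac14(1+X)$, where $X = 4\mF_1 + 4\mF_2 + O(h^3)$. Expanding,
\[
(1+X)^{q/2} = 1 + \frac q2 X + \frac{q(q-2)}{8}X^2 + O(X^3).
\]
Keeping terms up to quadratic order in $h$ (so $X^2$ contributes only $16\mF_1^2$) gives
\[
2^{-q}\bigl[1 + 2q\mF_1 + 2q\mF_2 + 2q(q-2)\mF_1^2\bigr],
\]
which is \eqref{Taylor.ph.q}. Formally, this step is the composition of Taylor polynomials of analytic maps, and the cubic terms are discarded since $T$ is degree-$2$ truncation.

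The main obstacle is nothing conceptual. It is careful bookkeeping of the second-order term: correctly applying the $\frac12$ normalization in $T$, and checking the cancellation of the $h\mG h$ terms between $|g'(0)|^2$ and $\la g''(0),g(0)\ra$. As a sanity check, I would verify the result on $h = $ const, where $\mG h = 0$ and $\ph = (1+h)/2$ exactly. Indeed, $\mF_1 = h/2$ and $\mF_2 = h^2/4$ give $\frac14(1+h)^2$, as required.
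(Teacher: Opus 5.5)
Your proposal is correct and follows essentially the same route as the paper. You expand $|g(\e)|^2$ to second order using Lemma \ref{lem:derivative}, with the $h\,\mG h$ cancellation checked correctly. You then obtain $\ph^q$ by composing with the binomial expansion of $(1+\e a+\e^2 b)^{q/2}$, which is exactly what the paper's proof does.
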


\begin{proof} 
One has 
\[
\ph^2(\e h) 
= | (\grad v_\e) \circ \gamma_\e |^2 
= \la (\grad v_\e) \circ \gamma_\e , (\grad v_\e) \circ \gamma_\e \ra 
= \la g(\e) , g(\e) \ra, 
\]
whose Taylor polynomial of degree 2 in $\e$ is 
\[
|g(0)|^2 + 2 \la g(0), g'(0) \ra \e 
+ \big( \la g''(0) , g(0) \ra + |g'(0)|^2 \big) \e^2.
\]
Then use Lemma \ref{lem:derivative}.  
The Taylor polynomial of $\ph^q$ is deduced from the fact that 
the Taylor polynomial of 
$(1 + \e a + \e^2 b)^p$ is $1 + \e p a + \e^2 (pb + \frac12 p (p-1) a^2)$.
\end{proof}

Now, we calculate the Taylor expansion of some integrals. 
Before doing this, we prove the following useful formulae.

\begin{lemma}[Integral identities for sets of normalized area] 
\label{lemma:useful.integrals} 
Suppose that $|D|=|D_0|=\pi$. Then 
\begin{align}
\int_{\S^1}h\,d\s
& = - \frac12 \int_{\S^1} h^2 \,d\s,
\label{h.integral}
\\
\int_{\S^1}\mF_1(h)\,d\s
& = -\frac14 \int_{\S^1} h^2 \,d\s, 
\label{A.1.integral}
\\
\int_{\S^1}\mF_2(h)\,d\s 
& = \frac14 \int_{\S^1} \Big( (\mG h)^2 + h^2 - |\grad_{\S^1}h|^2 \Big) \,d\s. 
\label{A.2.integral}
\end{align}
\end{lemma}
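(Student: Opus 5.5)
The three identities are proved in order, each feeding the next.

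For \eqref{h.integral}, I write the area in polar coordinates. Since $\pa D = \{x(1+h(x)) : x \in \S^1\}$, I get
\[
|D| = \int_{\S^1} \int_0^{1+h(x)} \rho \, d\rho \, d\s(x) = \frac12 \int_{\S^1} (1+h)^2 \, d\s = \pi + \int_{\S^1} h \, d\s + \frac12 \int_{\S^1} h^2 \, d\s .
\]
Imposing $|D| = \pi$ gives \eqref{h.integral} exactly; no Taylor remainder appears.

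For \eqref{A.1.integral}, the key fact is that $\mG$ annihilates constants and is self-adjoint on $L^2(\S^1)$, so $\int_{\S^1} \mG f \, d\s = \int_{\S^1} f \, \mG 1 \, d\s = 0$. Equivalently, by the divergence theorem, the integral of the normal derivative of a harmonic function over the boundary vanishes. Then
\[
\int_{\S^1} \mF_1(h) \, d\s = \frac12 \int_{\S^1} h \, d\s ,
\]
and \eqref{h.integral} gives \eqref{A.1.integral}.

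For \eqref{A.2.integral}, the two terms $-\mG(h^2)$ and $2\mG(h\mG h)$ in \eqref{eq:def.mA.2} integrate to zero by the same fact. Integrating by parts on $\S^1$ (a closed curve, with no boundary terms) gives
\[
\int_{\S^1} 2 h \Delta_{\S^1} h \, d\s = -2 \int_{\S^1} |\grad_{\S^1} h|^2 \, d\s .
\]
This combines with the $+|\grad_{\S^1} h|^2$ term to leave $-\int_{\S^1} |\grad_{\S^1} h|^2 \, d\s$. The remaining terms $(\mG h)^2 + h^2$ survive unchanged, which yields \eqref{A.2.integral}.

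The only point needing care is that the area constraint is used only in the first-order term. \eqref{A.2.integral} is purely algebraic and does not use $|D| = \pi$. In \eqref{A.1.integral}, the area constraint converts the linear quantity $\int_{\S^1} h \, d\s$ into the quadratic one $-\frac12 \int_{\S^1} h^2 \, d\s$. I expect no real obstacle, given the regularity from Lemma \ref{lemma:smooth}, which justifies the integrations by parts.
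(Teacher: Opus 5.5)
Your proof is correct and follows the paper's argument: the area constraint gives $\int_{\S^1}(1+h)^2\,d\s = 2\pi$, and the other two identities follow from the zero average of $\mG f$ together with integration by parts of $h\Delta_{\S^1}h$. The only cosmetic difference is that you compute $|D|$ in polar coordinates, while the paper uses the divergence theorem with $\la \gamma, \nu\circ\gamma\ra J = (1+h)^2$, and both lead to the same identity.
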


\begin{proof} 
By the assumption $|D|=\pi$, the divergence theorem, 
%\eqref{D.e.def} 
the change of variable $x = \gamma(y)$,  
and \eqref{nu.epsilon.parametrization}, we get
\begin{align*}
\int_{\S^1}1\,d\s
& = 2\pi
= \int_{D} \div x\,dx 
= \int_{\pa D} \la x, \nu_{\pa D} \ra \,d\sigma 
= \int_{\S^1} (1+h)^2\,d\s,
\end{align*}
which gives \eqref{h.integral}. 
Identities \eqref{A.1.integral}, \eqref{A.2.integral} hold by the definition 
\eqref{eq:def.mA.1}, \eqref{eq:def.mA.2} of $\mF_1, \mF_2$  
and because $\mG f$ has zero average, for any function $f$. 
%Similarly, we get \eqref{A.2.integral}.
\end{proof}

\begin{lemma}[Taylor polynomials of $\mA$ and $\mB$] 
\label{lemma:integral.perturbative.exp} 
Assume that $|D| = |D_0| = \pi$. Then the Taylor polynomials of degree 2 of $\mA(D)$ 
and $\mB(D)$ in \eqref{eq:defn.mA} considered as functions of the elevation function $h$ are 
\begin{align}
T(h, \mA)  
& = 
- 4 \pi \int_{\S^1} h \mG h \, d\sigma
+ 2 \pi \int_{\S^1} (\mG h)^2 \, d\sigma
- 2 \pi \int_{\S^1} (\Delta_{\S^1}h) \mG h \, d\sigma, 
\label{A.perturbative.expansion}
\\
T(h, \mB) 
& = \frac{3\pi}{4} \int_{\S^1} h \mG h \, d\sigma 
- \frac{3\pi}{4} \int_{\S^1} (\mG h)^2 \, d\sigma.
\label{B.perturbative.expansion}
\end{align}
\end{lemma}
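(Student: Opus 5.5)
We will express each of the functionals $P, I_H, I_R, Q, I_C$ as an integral over $\S^1$ through the change of variable $x=\gamma(y)$, whose line element is $d\sigma_{\pa D} = J\,d\sigma$. Then we insert the Taylor polynomials of $J^p$, $H_{\pa D}\circ\gamma$ (Lemma \ref{lemma:J.powers.expansion}) and $\ph^q$ (Lemma \ref{cor:v.t.taylor.exp}), keeping only terms up to order two. Concretely:
\begin{align*}
P &= \int_{\S^1} J\,d\sigma, \qquad
I_H = \int_{\S^1} (H_{\pa D}\circ\gamma)\,\ph\, J\,d\sigma, \qquad
I_C = \int_{\S^1} \ph^3 J\,d\sigma, \\
I_R &= \int_{\S^1} \frac{\la \gamma, \nu\circ\gamma\ra}{\ph}\, J\,d\sigma = \int_{\S^1} (1+h)^2 \ph^{-1}\,d\sigma .
\end{align*}
For $Q$ we use $\int_D |\grad v|^2 = \int_D v = \int_{\pa D}$-type identities: integrating by parts, $Q=\int_D v\,dx$. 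A boundary expression is more convenient, e.g. via Pohozaev/Rellich for the torsion problem, $\int_{\pa D}|\grad v|^2\la x,\nu\ra\,d\sigma = 4\int_D v$ in dimension two. This gives
\[
Q = \frac14\int_{\S^1} \ph^2 (1+h)^2\,d\sigma,
\]
which has a direct Taylor expansion from \eqref{grad.square.exp}.

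Next, expand each of the five functionals to order two. At each step we use \eqref{h.integral} to replace $\int h$ by $-\frac12\int h^2$, and \eqref{A.1.integral}, \eqref{A.2.integral} to integrate $\mF_1,\mF_2$. Zero-average terms $\int \mG f$ vanish, and the product terms $\int h\,\mF_1$, $\int \mF_1^2$ produce $\int h\mG h$ and $\int(\mG h)^2$. Integrations by parts on $\S^1$ turn $\int h\Delta_{\S^1}h$ into $-\int|\grad_{\S^1}h|^2$. We expect to obtain the following:
\begin{itemize}
\item $P = 2\pi + O(h^2)$ and $I_R = 4\pi + O(h)$, so that the unperturbed values are $P=2\pi$, $I_R=4\pi$, $Q=\pi/8$, $I_C=\pi/4$, $I_H=\pi$.
\item $\mA(0)=(2\pi-\pi)4\pi-4\pi^2=0$ and $\mB(0)=4\pi(3\pi/8-\pi/4)-2\pi\cdot\pi/8=0$, consistently with the statement.
\end{itemize}

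Then we form the products in $\mA=(P-I_H)I_R-P^2$ and $\mB=I_R(3Q-I_C)-2PQ$, collecting the linear and quadratic parts. The linear parts should cancel after using \eqref{h.integral}. This cancellation reflects the fact that the disc solves the problem for every $\lambda_0$, and checking it is a useful consistency test. The quadratic part of a product $XY$ is $X_0Y_2+X_1Y_1+X_2Y_0$, where the cross terms $X_1Y_1$ are products of averages like $\int h$ and $\int \mF_1$, which are already quadratic, hence of order four and negligible. So only $X_0Y_2+X_2Y_0$ matter, which greatly simplifies the bookkeeping.

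The main obstacle is the bookkeeping of quadratic terms, especially in $I_H$ and $I_C$, where three expansions multiply. There one must carefully combine $\int(\Delta_{\S^1}h)\mG h$, $\int h\Delta_{\S^1}h$, $\int|\grad_{\S^1}h|^2$ and $\int h\mG h$. We will use the self-adjointness of $\mG$ and the fact that $\mG$ commutes with $\Delta_{\S^1}$ on $\S^1$, to express everything in the four quantities $\int h^2$, $\int h\mG h$, $\int(\mG h)^2$, $\int(\Delta_{\S^1}h)\mG h$. The $\int h^2$ and $|\grad_{\S^1} h|^2$ contributions must cancel to give \eqref{A.perturbative.expansion} and \eqref{B.perturbative.expansion}. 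As a check, we will test the result on the modes $\ell=0,1$, where the right-hand sides should vanish (for $\ell=1$, $\mG h=h$ and $\Delta_{\S^1}h=-h$).
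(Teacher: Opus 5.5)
Your proposal is correct and follows essentially the same route as the paper. You rewrite $P$, $I_R$, $I_H$, $Q$ and $I_C$ as integrals over $\S^1$, using the Pohozaev form $Q=\frac14\int_{\pa D}|\grad v|^2\la x,\nu\ra\,d\sigma$ for $Q$. You then expand them with Lemmas \ref{lemma:J.powers.expansion}, \ref{cor:v.t.taylor.exp} and \ref{lemma:useful.integrals} into \eqref{perimeter.exp.}, \eqref{Taylor.IR}, \eqref{Taylor.IH}, \eqref{Taylor.Q}, \eqref{Taylor.IC}, and combine them, with cross terms negligible because every linear part is a multiple of $\int_{\S^1} h$. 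There is one arithmetic slip in your consistency check of $\mB(0)$: the last term is $2PQ=2\cdot 2\pi\cdot\frac{\pi}{8}=\frac{\pi^2}{2}$, not $2\pi\cdot\frac{\pi}{8}$.
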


\begin{proof}  
We calculate the Taylor polynomial of $P, Q, I_R, I_C, I_H$. 
With the change of variable $x = \gamma(y)$, 
the perimeter of $D$ is the integral of $J$ along $\S^1$.
The Taylor polynomial of $J$ is in \eqref{J.expansion}. % $T(h, J) = 1 + h + \frac{1}{2} |\grad_{\S^1} h|^2$, 
Integrating on $\S^1$, and using \eqref{h.integral}, we obtain that the Taylor polynomial of $P$ is 
\begin{equation}\label{perimeter.exp.}
T(h, P) = 2\pi 
- \frac12 \int_{\S^1} h^2 \,d\sigma
+ \frac12 \int_{\S^1} |\grad_{\S^1}h|^2 \, d\sigma.
\end{equation}
Next, we consider the integral $I_R$ in \eqref{def.Q.IR}.
By \eqref{nu.epsilon.parametrization}, one has $\la \gamma , \nu \circ \gamma \ra J = (1+h)^2$, 
and, with the change of variable $x = \gamma(y)$, 
$I_R$ is the integral of $(1+h)^2 \mgt^{-1}$ along $\S^1$, 
where $\mgt = |(\grad v) \circ \gamma|$, see \eqref{def.mgt}. 
The Taylor polynomial of $\ph^{-1}$ is in \eqref{Taylor.ph.q}. 
Integrating on $\S^1$, and using Lemmas \ref{cor:v.t.taylor.exp} and \ref{lemma:useful.integrals},
we obtain 
\begin{align} 
T(h, I_R) 
& = 4\pi 
- \int_{\S^1} h^2  \, d\sigma
+ \int_{\S^1} |\grad_{\S^1}h|^2 \, d\sigma
- 2 \int_{\S^1} h \mG h \, d\sigma
+ 2 \int_{\S^1} (\mG h)^2 \, d\sigma.
\label{Taylor.IR}  % \label{normal.der.integral.exp.}
\end{align}
Next, we consider the integral $I_H$ in \eqref{def.IC.IH}. 
With the change of variable $x = \gamma(y)$, 
$I_H$ is the integral of the product 
$(H_{\pa D} \circ \gamma) \ph J$ along $\S^1$. 
The Taylor polynomials of these factors are in 
\eqref{J.expansion}, 
\eqref{mean.curvature.perturbative.expansion}, 
\eqref{Taylor.ph.q}. 
Integrating on $\S^1$, and using Lemmas \ref{cor:v.t.taylor.exp} and \ref{lemma:useful.integrals},
and the fact that $\Delta_{\S^1}h$ has vanishing integral, we obtain the Taylor polynomial 
\begin{equation}  \label{Taylor.IH}
T(h,I_H) = \pi 
- \frac14 \int_{\S^1} h^2 \, d\sigma
+ \frac14 \int_{\S^1} |\grad_{\S^1} h|^2 \, d\sigma
+ \frac12 \int_{\S^1} h \mG h \, d\sigma 
+ \frac12 \int_{\S^1} (\Delta_{\S^1} h) \mG h \, d\sigma. 
\end{equation}
Now we consider the integral $Q$ in \eqref{def.Q.IR}. 
By the divergence theorem, one proves that $Q$ can be written as the integral 
along the boundary 
\[
Q = \frac14 \int_{\pa D} |\grad v|^2 \la x , \nu \ra \, d\sigma,
\]
see equation (4.30) in \cite{BLL.1}. 
Then we proceed like we did for $I_R$, so that $Q$ is $\frac14$ times the integral 
of $(1+h)^2 \ph^2$ along $\S^1$, and 
\begin{equation}  \label{Taylor.Q}
T(h,Q) = 
\frac{\pi}{8}  
+ \frac14 \int_{\S^1} h^2 \, d\sigma
- \frac{1}{16} \int_{\S^1} |\grad_{\S^1} h|^2 \, d\sigma
- \frac14 \int_{\S^1} h \mG h \, d\sigma 
+ \frac{1}{16} \int_{\S^1} (\mG h)^2 \, d\sigma. 
\end{equation}
Similarly, $I_C$ in \eqref{def.IC.IH} is the integral of $\ph^3 J$ along $\S^1$, and 
\begin{equation} \label{Taylor.IC}
T(h,I_C) 
= \frac{\pi}{4}  
+ \frac12 \int_{\S^1} h^2 \, d\sigma
- \frac{1}{8} \int_{\S^1} |\grad_{\S^1} h|^2 \, d\sigma
- \frac34 \int_{\S^1} h \mG h \, d\sigma 
+ \frac{3}{8} \int_{\S^1} (\mG h)^2 \, d\sigma. 
\end{equation}
The lemma follows from the definition of $\mA$ and $\mB$ in \eqref{eq:defn.mA}.    
\end{proof}

%
%We are now ready to state the main result of this section. 
%\begin{corollary}\label{rem:A.B.signs}  
%Let $D_\e$ be a nearly circular set and
%assume
%\begin{equation}\label{eq:k0geq}
 %k_0>\frac{3}{32}.
%\end{equation}
%Then,
%\begin{equation}\label{eq:statement.final}
%\mathcal C(D) \geq \Big(4k_0-\frac38\Big) |D_\e\triangle B|^2.
%\end{equation}
%\end{corollary}

\begin{proof}[\textbf{Proof of Theorem \ref{thm:rigidity}.}]
We decompose the elevation function $h$ by using spherical harmonics 
(i.e. restriction to the circle $\S^1$ of harmonic homogeneous polinomials 
of two real variables), which provide an orthonormal Hilbert basis of $L^2(\S^1)$. 
Of course parametrizing $\S^1$ with the exponential map
$\T \to \S^1$, $\th \to (\cos \th, \sin \th)$, 
and using classical Fourier series of periodic functions of one real variable 
would be an equivalent and more standard choice;
on the other hand, it would force us to translate all the integrals on $\S^1$ 
into integrals on $\T$, which is an additional step that we can simply avoid. 
 
For any positive integer $\ell$, the real harmonic polynomials of degree $\ell$ in $\R^2$ 
form a vector space of dimension 2. A basis of this space is given by the real and imaginary part 
of the holomorphic function $(x_1 + i x_2)^\ell$.
We denote by $Y_{\ell, m}$, $m=1,2$, their restriction to $\S^1$ 
multiplied by the normalization coefficient $1 / \sqrt{\pi}$
to obtain an orthonormal Hilbert basis of $L^2(\S^1)$.  
For $\ell=0$, there is only $Y_{0,1} = 1 / \sqrt{2\pi}$. 
%We denote $\mI = \{ (0,1) \} \cup \{ (\ell,m) : \ell \geq 1, \ m=1,2 \}$. 
%Using the spherical harmonics, we 
We decompose any function $h \in L^2(\S^1)$ as
\[
h = h_{0,1} Y_{0,1} + \sum_{\begin{subarray}{c} \ell \geq 1 \\ m=1,2 \end{subarray}} % \sum_{(\ell,m) \in \mI} 
h_{\ell, m} Y_{\ell, m}, \quad \ 
h_{\ell,m} = \int_{\S^1} h Y_{\ell,m}\, d\sigma.
\]
For $\ell=0$ and $\ell=1$, we have $Y_{0,1} = 1 / \sqrt{2\pi}$ 
and $Y_{1,m} = x_m / \sqrt{\pi}$, $m=1,2$. 
%\[
%Y_0= \frac{1}{\sqrt{ 2\pi}} \quad \text{and}\quad Y_{1,m}= \frac{x_m}{\sqrt \pi}.
%\]
Therefore
\[
h_{0,1} = \frac{1}{\sqrt {2 \pi}} \int_{\S^1} h\, d\sigma, \quad 
h_{1,m} = \frac{1}{\sqrt \pi} \int_{\S^1} x_m h\, d\sigma, \quad m=1,2.
\]
By assumption, $|D| = |D_0| = \pi$, whence, by \eqref{h.integral}, 
\begin{equation} \label{storz.01}
h_{0,1}^2 = \frac{1}{2\pi} \left(\int_{\S^1} h \, d\sigma \right)^2 
= \frac{1}{2\pi} \left( - \frac12 \int_{\S^1} h^2 \, d\sigma \right)^2. 
\end{equation}
Thus $h_{0,1}^2$ is homogeneous of degree 4 in $h$, 
and it does not contribute to the Taylor polynomials of degree 2. 
We use the barycenter assumption to get rid also of the first order spherical harmonics.
For $m=1,2$, since $\div (x_m x) = 3 x_m$, by the divergence theorem we have  
\begin{equation*}
\int_{D} x_m \, dx 
= \frac13 \int_{D} \div (x_m x) \, dx 
= \frac13 \int_{\pa D} x_m \la x, \nu \ra \, d\sigma. 
\end{equation*}
Since $\la \gamma , \nu \circ \gamma \ra J = (1 + h)^2$, 
making the change of variable $x = \gamma(y)$, 
and renaming $x$ the integration variable, we get 
\[
\frac13 \int_{\pa D} x_m \la x, \nu \ra \, d\sigma
= \frac13 \int_{\S^1} x_m (1+h)^3 \, d\sigma. 
\]
This is $|D|$ times the $m$-th component of the barycenter of the set $D$. 
%is
%\begin{equation*}
%\int_{D_\e}x_i\,dx=\frac13\int_{\pa D_\e}x_i\la x,\nu_\e\ra\,d\mH_1=\frac13\int_{\S^1}x_i(1+\e h)^3 \,d\s.
%\end{equation*}
If the barycenter is at the origin $0$, then 
\begin{equation}\label{eq:ell=1}
\int_{\S^1} x_m h \, d\sigma 
+ \int_{\S^1} x_m h^2 \, d\sigma 
+ \frac13 \int_{\S^1} x_m h^3 \, d\s = 0,
\end{equation}
and $h_{1,m}^2 = \frac{1}{\pi} (\int_{\S^1} x_m h \, d\sigma)^2$ 
does not contribute to the Taylor polynomials of degree 2.  
%Therefore, we have
%\[
%(\e h_{1,m})^2 = o(\e^2),
%\]
Hence in the Taylor polynomials \eqref{A.perturbative.expansion}, \eqref{B.perturbative.expansion},
only coefficients $h_{\ell,m}$ with $\ell \geq 2$ are involved. 

%which gives 
%\[
%\|\e h\|^2_{L^2(\S^1)}= \sum_{\ell\geq 0} \sum_{m} (\e h_{\ell, k})^2= \sum_{\ell\geq 2} \sum_{m=1}^2   (\e h_{\ell, k})^2.
%\]
From the theory of spherical harmonics, one has 
$\mG Y_{\ell, m} = \ell Y_{\ell, m}$, 
$- \Delta_{\S^1} Y_{\ell,m} = \ell^2 Y_{\ell,m}$ 
and $\int_{\S^1} |\nabla_{\S^1} Y_{\ell,m}|^2 \, d\sigma 
= - \int_{\S^1} Y_{\ell,m} \Delta_{\S^1} Y_{\ell,m} \, d\sigma = \ell^2$. 
Thus the Taylor polynomial of degree 2 of the functional $\mC = \mA + \lm_0 \mB$ 
defined in \eqref{def.mC} considered as a function of $h$ is 
\begin{align}
& T(h, \mC) 
\notag \\ 
& = 
- 2 \pi \int_{\S^1} (\Delta_{\S^1} h) \mG h \, d\sigma 
+ \Big(2 \pi - \lm_0 \frac{3 \pi}{4} \Big) \int_{\S^1} (\mG h)^2 \,d\s
+ \Big( - 4 \pi + \lm_0 \frac{3 \pi}{4} \Big) \int_{\S^1} h(\mG h) \,d\s
\notag \\
& = \sum_{\begin{subarray}{c} \ell \geq 2 \\ m=1,2 \end{subarray}}  
\Big[ 2 \pi \ell^3 
+ \Big(2 \pi - \lm_0 \frac{3 \pi}{4} \Big) \ell^2 
+ \Big( - 4 \pi + \lm_0 \frac{3 \pi}{4} \Big) \ell \Big] h_{\ell,m}^2
\notag \\
& = 2 \pi \sum_{\begin{subarray}{c} \ell \geq 2 \\ m=1,2 \end{subarray}} 
\ell (\ell -1) \Big(\ell+2-\frac{3 \lm_0}{8} \Big) h^2_{\ell,m}.
\label{Taylor.mC}
\end{align}
For $\lm_0 < 32/3$, one has $(\ell+2-\frac{3 \lm_0}{8}) \geq (4 - \frac{3 \lm_0}{8}) > 0$ 
for all $\ell \geq 2$. Hence 
\[
\min_{\ell \geq 2} \frac{ 2 \pi \ell (\ell -1) (\ell + 2 - \frac{3 \lm_0}{8}) }{ \ell^3 } 
=: \mu_0 > 0,
\]
and  
\[
T(h,\mC) \geq \mu_0 \| a \|_{H^{\frac32}(\S^1)}^2, 
\]
where $a$ is defined by the decomposition
\begin{equation} \label{def.a.b.h}
a := \sum_{\begin{subarray}{c} \ell \geq 2 \\ m=1,2 \end{subarray}} 
h_{\ell,m} Y_{\ell,m}, 
\quad \ 
b := h_{0,1} Y_{0,1} + \sum_{m=1,2} h_{1,m} Y_{1,m}, 
\quad \ 
h = a+b.
\end{equation}
Since the functions in the integrals defining $\mC$ all satisfy \eqref{est.R.base}, 
the Taylor remainder $R(h,\mC)$ of the functional $\mC$ satisfies 
\[
|R(h,\mC)| \leq C \| h \|_{C^2(\S^1)}^3
\] 
for all $h$ in a ball $\| h \|_{C^2} \leq \delta$. 
To shorten the notation, we omit to indicate that all the norms are on $\S^1$. 
Hence 
\begin{equation}  \label{end.arg.06}
\mC(D) = T(h,\mC) + R(h,\mC) 
\geq \mu_0 \| a \|_{H^{\frac32}}^2 - C \| h \|_{C^2}^3.
%= \mu_0 \| a \|_{H^{\frac32}}^2 
%\Big( 1 - \frac{C \| h \|_{C^2}^3}{\mu_0 \| a \|_{H^{\frac32}}^2} \Big).
\end{equation}
Note that the $C^2$ norm is stronger than the $H^{\frac32}$ norm, 
and their ratio is, in general, unbounded. 
We also note that the bootstrap regularizing effect of Lemma \ref{lemma:smooth} 
concerns regularities at least $C^2$, and its direct application is not sufficient 
to show that that ratio is small. 
To overcome this issue, we proceed in the following way.
By triangular inequality,
\begin{equation} \label{end.arg.04}
\| h \|_{C^2} 
\leq \| a \|_{C^2}
+ \| b \|_{C^2}.
\end{equation}
%First we estimate $a$. 
By Sobolev embedding and interpolation of Sobolev norms 
(see, e.g., \cite{BJL.Dir.Neu}), one has 
\[
\| a \|_{C^2} 
\leq C \| a \|_{H^{3}} 
\leq C \| a \|_{H^{\frac32}}^{\frac23} 
\| a \|_{H^6}^{\frac13},
\]
because $s = (1-\theta) s_0 + \theta s_1$ with 
$s=3$, 
$s_0 = \frac32$, 
$s_1 = 6$,
$\theta = \frac13$.
By the definition of $a$ and Lemma \ref{lemma:smooth}, 
\[
\| a \|_{H^6} 
\leq \| h \|_{H^6} 
\leq C \| h \|_{C^6} 
\leq C \| h \|_{C^2}
\]
(%with a usual abuse of notation, 
different constants are denoted with the same letter $C$).
Hence 
\begin{equation} \label{end.arg.01}
\| a \|_{C^2} 
\leq C \| a \|_{H^{\frac32}}^{\frac23} 
\| h \|_{C^2}^{\frac13}.
\end{equation}
Now we estimate $b$. 
By its definition in \eqref{def.a.b.h}, 
\begin{align}
\| b \|_{C^2} 
& \leq |h_{0,1}| \| Y_{0,1} \|_{C^2} 
+ |h_{1,1}| \| Y_{1,1} \|_{C^2}  
+ |h_{1,2}| \| Y_{1,2} \|_{C^2}  
\notag \\
& \leq C (|h_{0,1}| + |h_{1,1}| + |h_{1,2}|) 
\notag \\ 
& \leq C (|h_{0,1}|^2 + |h_{1,1}|^2 + |h_{1,2}|^2)^{\frac12} 
= C \| b \|_{L^2}. 
\label{end.arg.02}
\end{align}  
By \eqref{storz.01} and \eqref{eq:ell=1}, 
using the pointwise estimates 
\[
|x_m| \leq 1, \quad \ 
|h|^3 \leq |h|^2 \| h \|_{C^0} \leq C |h|^2
\]  
in the integrals, we obtain the quadratic estimates
\[
|h_{0,1}| \leq C \| h \|_{L^2}^2, \quad \ 
|h_{1,m}| \leq C \| h \|_{L^2}^2, \quad m=1,2,
\]
whence 
\[
\| b \|_{L^2} = (|h_{0,1}|^2 + |h_{1,1}|^2 + |h_{1,2}|^2)^{\frac12}
\leq C \| h \|_{L^2}^2.
\]
Since $a$ and $b$ are orthogonal in $L^2(\S^1)$, 
the last inequality becomes % one has  
% OLD: \| h \|_{L^2}^2 = \| a \|_{L^2}^2 + \| b \|_{L^2}^2,
\[
\| b \|_{L^2} 
\leq C \| h \|_{L^2}^2
= C (\| a \|_{L^2}^2 + \| b \|_{L^2}^2).
\]
%Hence 
%\[
%\| b \|_{L^2} (1 - C \| b \|_{L^2}) 
%\leq C \| a \|_{L^2}^2.
%\]
For $\| b \|_{L^2} \leq \| h \|_{L^2} \leq \delta$ sufficiently small, 
the quadratic term $C \| b \|_{L^2}^2$ can be absorbed by the linear term $\| b \|_{L^2}$, 
and we get 
\begin{equation} \label{end.arg.03}
\| b \|_{L^2} \leq C \| a \|_{L^2}^2.
\end{equation}
By \eqref{end.arg.04}, \eqref{end.arg.01} and \eqref{end.arg.03}, 
\begin{equation} \label{end.arg.05}
\| h \|_{C^2} 
\leq \| a \|_{C^2} + C \| a \|_{L^2}^2 
\leq \| a \|_{C^2} (1 + C \| a \|_{L^2})
\leq C \| a \|_{C^2},
\end{equation}
because $\| a \|_{L^2} \leq \| h \|_{L^2} \leq C$. 
Thus, by \eqref{end.arg.05} and \eqref{end.arg.01}, 
\[
\| h \|_{C^2}^3 
\leq C \| a \|_{C^2}^3 
\leq C \| a \|_{H^{\frac32}}^2 \| h \|_{C^2},
\]
and therefore, by \eqref{end.arg.06}, 
\[
\mC(D) 
\geq ( \mu_0 - C \| h \|_{C^2} ) \| a \|_{H^{\frac32}}^2
\geq \frac{\mu_0}{2} \| a \|_{H^{\frac32}}^2
\]
for $\| h \|_{C^2} < \delta$ with $\delta$ sufficiently small. 
On the other hand, by \eqref{main.identity}, $\mC(D) = 0$. This implies that $a = 0$, 
and, by \eqref{end.arg.03}, also $b=0$, namely $h = 0$, and $D$ is a disc. 

Since $D$ is a disc, $\Om$ and $u$ are easily determined, see Remark 4.4 in \cite{BLL.1}.
The proof of Theorem \ref{thm:rigidity} is complete. 
\end{proof}

\bigskip

\begin{flushright}

\textbf{Pietro Baldi}

Dipartimento di Matematica e Applicazioni ``R. Caccioppoli''

University of Naples Federico II

Via Cintia, Monte Sant'Angelo, 80126 Naples, Italy

pietro.baldi@unina.it

\medskip

\textbf{Domenico Angelo La Manna}

Dipartimento di Matematica e Applicazioni ``R. Caccioppoli''

University of Naples Federico II

Via Cintia, Monte Sant'Angelo, 80126 Naples, Italy

domenicoangelo.lamanna@unina.it

\medskip

\textbf{Giuseppe La Scala}

Mathematical and Physical Sciences for Advanced Materials and Technologies

Scuola Superiore Meridionale

Via Mezzocannone, 4, 80138 Naples, Italy

giuseppe.lascala-ssm@unina.it

\end{flushright}

\end{document}